\documentclass{amsart}
\usepackage{biblatex} 
\usepackage{mathtools} 
\usepackage{xifthen} 
\usepackage{hyperref} 
\usepackage{derivative} 
\usepackage{graphicx} 

\hypersetup{
	colorlinks,
	linkcolor={red},
	citecolor={magenta},
	urlcolor={blue}
}

\theoremstyle{plain}
\newtheorem{theorem}{Theorem}

\newtheorem{proposition}{Proposition}

\newtheorem{corollary}{Corollary}

\theoremstyle{definition}
\newtheorem{definition}{Definition}[section]

\theoremstyle{remark}
\newtheorem{example}{Example}

\providecommand{\set}[2][]{
	\ifthenelse{\isempty{#1}}{
		\left\{#2\right\}
	}{
		\left\{\,#1\;\middle|\;#2\,\right\}}
}
\providecommand{\abs}[1]{\left\lvert#1\right\rvert} 
\providecommand{\norm}[1]{\left\lVert#1\right\rVert} 
\providecommand{\A}{\mathbb{A}} 
\providecommand{\C}{\mathbb{C}} 
\providecommand{\J}{\mathbb{J}} 
\providecommand{\N}{\mathbb{N}} 
\providecommand{\R}{\mathbb{R}} 
\providecommand{\Z}{\mathbb{Z}} 

\title{A rock-paper-scissors Mandelbrot set}

\author[C.~Aten]{Charlotte~Aten}
\urladdr{\href{https://aten.cool}{https://aten.cool}}
\email{\href{mailto:charlotte.aten@posteo.net}{charlotte.aten@posteo.net}}

\subjclass[2020]{37F80, 30G35, 35F05, 17A01}
\keywords{Hypercomplex dynamics, finite-dimensional real algebras, nonassociative algebras, fractals, linear first-order PDEs}

\begin{document}

\begin{abstract}
The titular object of this paper is an analogue of the Mandelbrot set over the \(3\)-dimensional real algebra whose multiplication is the bilinear extension of the rock-paper-scissors operation. In order to study the dynamics of the mappings \(x\mapsto x^2+c\) in this setting, a notion of holomorphy for functions on general finite-dimensional real algebras is introduced. Under a mild assumption it is shown that for such algebras holomorphy is always equivalent to solving a finite system of linear first-order PDEs generalizing the Cauchy-Riemann equations. Code is provided for generating animations of these fractals and for exploring them in a video game format.
\end{abstract}

\maketitle

\begin{center}
	\includegraphics[height=7cm]{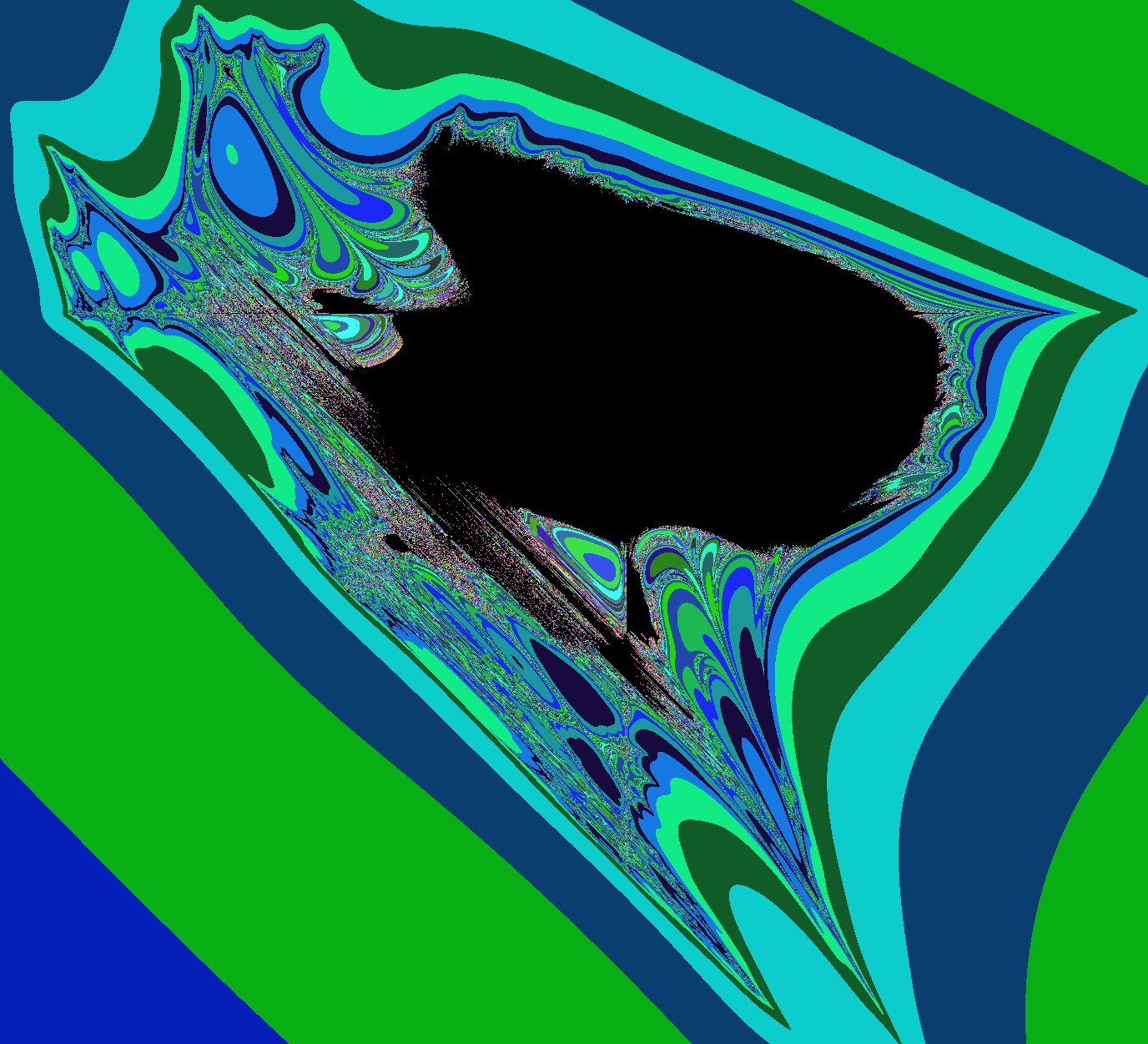}
\end{center}

\tableofcontents

\section{Introduction}
\label{section:introduction}
There has been interest in the past in a higher-dimensional (and in particular \(3\)-dimensional) analogue of the Mandelbrot set\cite{barrallo2010}. The study of the Mandelbrot set belongs to the discipline of complex dynamics. Higher dimensional (and in particular \(3\)-dimensional) analogues of the complex numbers and their holomorphic functions have been sought since the 19th century. The \(4\)-dimensional quaternions, by virtue of being a division algebra, have received the most attention in this regard. Hamilton himself observed that it seemed difficult to endow the quaternions with an analogue of differentiation which makes the function \(x\mapsto x^2\) holomorphic\cite[p.998]{deavours1973}.

In \autoref{section:holomorphic_functions} we define holomorphy for a function of a single variable over a finite-dimensional real algebra. Our definition implies that for a holomorphic function \(f\) we have
	\[
		vf'(x)=f'(x)v=\nabla_vf(x)
	\]
for all directions \(v\) and all \(x\), although it is stronger than this condition. The demand that \(vf'(x)=f'(x)v\) is already quite strong, for this means that any holomorphic function must have the image of its derivative contained entirely in the center of the underlying algebra. For the quaternions, this would mean that a holomorphic function must have a purely real-valued derivative.

While one might imagine that we would end up with a paucity of examples of such functions, it turns out that the existence of several classes of holomorphic functions may be demonstrated for large families of finite-dimensional real algebras. What's more, the holomorphy of these classes is in each case equivalent to an algebraic property of the underlying real algebra. For example, all constant functions on a finite-dimensional real algebra \(\A\) are holomorphic if and only if no nonzero element of \(\A\) annihilates \(\A\). Under the relatively weak assumption of a single divisible element (that is, an element with either its left- or right-multiplication map invertible), we can provide similar results not only for classes of functions like \(x\mapsto ax\) for \(a\in\A\) but also for rules of differentiation. Under this assumption, the holomorphic functions on \(\A\) form a real vector space in the usual manner. They do not in general form an algebra, but the Product Rule is equivalent to every central element being middle nuclear and communal, a purely algebraic condition on \(\A\).

We find that in fact the unital finite-dimensional real algebras for which the Product Rule holds are precisely those which are associative and commutative. This doesn't completely resolve the question of which algebras \(\A\) carry an algebra of holomorphic functions under pointwise multiplication, but it answers the question when the derivative of a product is given by the familiar rule.

Also under the assumption of a divisible element, we find that the algebraic condition characterizing the Chain Rule is actually a weak form of the ``middle nuclear and communal'' condition for the Product Rule. Thus, that the Product Rule holds implies that the Chain Rule does as well, in this context.

We state one result as theorem, which is the existence of an analogue of the Cauchy-Riemann equations. Our \autoref{theorem:Cauchy-Riemann_equations} says that for any \(n\)-dimensional real algebra containing a divisible element there exists a system of \(n^2\) linear first-order partial differential equations whose satisfaction is equivalent to holomorphy. This is done by giving an example of such a system. The usual Cauchy-Riemann equations are a special case of this result.

In \autoref{section:holomorphic_functions_on_the_RPSions} we study the holomorphic functions on the RPSions, a \(3\)-dimensional real algebra \(\J\) whose multiplication is the rock-paper-scissors operation on its standard basis. For a discussion of the rock-paper-scissors operation in a more combinatorial context, see the author's previous work on the subject\cite{aten2020}. We show that one system of Cauchy-Riemann-like equations for \(\J\) is:
	\begin{align*}
		&\pdv{u}{y}=\pdv{v}{z}=\pdv{w}{x}=0 \\
		&2\pdv{u}{z}=\pdv{u}{x}+\pdv{v}{y}-\pdv{w}{z} \\
		&2\pdv{v}{x}=-\pdv{u}{x}+\pdv{v}{y}+\pdv{w}{z} \\
		&2\pdv{w}{y}=\pdv{u}{x}-\pdv{v}{y}+\pdv{w}{z}
	\end{align*}
We then use these equations to show that the mapping \(x\mapsto x^2+c\) is holomorphic over \(\J\).

In \autoref{section:dynamics_in_real_algebras} we generalize the setup for complex dynamics to our notion of holomorphic function. We frequently find that \(f\colon\A\to\A\) is holomorphic but its iterates \(f^n\colon\A\to\A\) are not, since the Chain Rule does not hold. This is certainly the case in \(\J\). Since we would like to define attracting and repelling periodic points, this is a difficulty. It turns out we can substitute \(\abs{(f^n)'(x)}\) with the operator norm of a certain linear transformation associated to the point \(x\) and the function \(f\). This definition collapses to the usual one in the case of \(\C\).

The software associated to this work is discussed in \autoref{section:software_for_fractal_animations} and \autoref{section:video_game_fractal_visualization}. These sections detail the mathematical choices made in visualizing fractals associated to these hypercomplex dynamical systems on the computer. In \autoref{section:future_directions} we conclude with several possible directions for future research.

We recall some terminology to be used throughout. Given a function \(f\colon\R^m\to\R^n\) and \(v\in\R^m\), the \emph{directional derivative} \(\nabla_vf\colon\R^m\to\R^n\) is given by
	\[
		\nabla_vf(x)=\lim_{\substack{h\to0\\h\in\R}}\frac{f(x+hv)-f(x)}{h}.
	\]
The \emph{Jacobian} of a differentiable function \(f\colon\R^m\to\R^n\) at a point \(x\in\R^m\) is the linear transformation \(J_f(x)\colon\R^m\to\R^n\) such that \(J_f(x)v=\nabla_vf(x)\) for each \(x\in\R^m\). Recall that we have the Chain Rule for differentiable functions \(f\colon\R^m\to\R^n\) and \(g\colon\R^n\to\R^k\), which says that
	\[
		J_{g\circ f}(x)=J_g(f(x))\circ J_f(x).
	\]

This work lies at the intersection of two different meanings of the word ``algebra''. What we refer to here as a ``real algebra'' is a real vector space equipped with a bilinear multiplication, which is not assumed to be unital, commutative, or associative. These structures are sometimes called ``nonassociative algebras''\cite{schafer1961}. The magmas we discuss are algebras in the sense of universal algebra. See \cite{smith1999} or \cite{bergman2012} for a more elementary introduction to universal algebra and \cite{bergman2015} for a graduate-level treatment.

Given any finite magma \(\mathbf{A}=(A,f\colon A^2\to A)\) we can consider the finite-dimensional real algebra \(\R[\mathbf{A}]\) whose elements are \(\R\)-linear combinations of the members of \(A\) and whose multiplication is the bilinear extension of \(f\) to the set \(\R[A]\) of such linear combinations. That is, we define multiplication in \(\R[\mathbf{A}]\) by
	\[
		\left(\sum_{x\in A}u_xx\right)\left(\sum_{y\in A}v_yy\right)=\sum_{\mathclap{(x,y)\in A^2}}u_xv_yf(x,y).
	\]

The case of \(\R[\mathbf{G}]\) for a finite group \(\mathbf{G}\) is known as the \emph{group algebra} of \(\mathbf{G}\). These are well-studied in representation theory\cite{serre1977,fulton2004}. The monoid algebra \(\R[\mathbf{M}]\) for a finite monoid \(\mathbf{M}\) is similarly studied in the representation theory of finite monoids\cite{steinberg2016}. Both of these families of examples have an associative multiplication and a multiplicative identity, although their multiplication may not be commutative and in general division doesn't make sense.

\begin{definition}[Magma algebra]
\label{definition:magma_algebra}
By analogy with the group and monoid cases, we refer to a real algebra of the form \(\R[\mathbf{A}]\) as the \emph{magma algebra} of \(\mathbf{A}\).
\end{definition}

Sometimes the magma algebra \(\R[\mathbf{A}]\) is known as a \emph{convolution algebra}\cite[p.5]{wodzicki2015}.

Here we will continue to assume that \(\mathbf{A}\) is finite, although similar considerations can be made for infinite magmas by taking \(\R[A]\) to consist of finite linear combinations of members of \(A\) when \(A\) is infinite.

Other than the special case of the ring \(\R\) itself, which is \(\R[\mathbf{A}]\) when \(\mathbf{A}\) is the trivial magma, the Cayley-Dickson algebras are not of the form \(\R[\mathbf{A}]\) for some magma \(\mathbf{A}\). Unlike the magma algebras, the other Cayley-Dickson algebras do not admit a basis with respect to which the multiplication is a magma operation.

\begin{proposition}
\label{proposition:Cayley-Dickson_not_magma_algebra}
Suppose that \(\A\) is a finite-dimensional real algebra which contains a nonzero identity element \(e\) and an element \(i\) such that \(i^2=-e\). We cannot have \(\A=\R[\mathbf{A}]\) for any magma \(\mathbf{A}=(A,f)\).
\end{proposition}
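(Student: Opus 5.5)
The plan is to find a nonzero algebra homomorphism \(\R[\mathbf{A}]\to\R\) and then note that \(i^2=-e\) cannot survive such a map. The natural candidate is the \emph{augmentation} (coefficient-sum) functional
	\[
		\sigma\colon\R[\mathbf{A}]\to\R,\qquad \sigma\left(\sum_{x\in A}u_xx\right)=\sum_{x\in A}u_x.
	\]

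\emph{Step 1: \(\sigma\) is multiplicative.} The map \(\sigma\) is clearly linear. Since the multiplication of \(\R[\mathbf{A}]\) is the bilinear extension of \(f\), for basis elements we have \(\sigma(xy)=\sigma(f(x,y))=1=\sigma(x)\sigma(y)\). Bilinearity then gives
	\[
		\sigma(ab)=\sum_{(x,y)\in A^2}u_xv_y=\sigma(a)\sigma(b)
	\]
for all \(a=\sum u_xx\) and \(b=\sum v_yy\).

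\emph{Step 2: \(\sigma(e)=1\).} Suppose \(\A=\R[\mathbf{A}]\), so that \(e\) and \(i\) are elements of \(\R[\mathbf{A}]\). Because \(e\neq0\), the algebra is nonzero and \(A\) is nonempty; pick any \(x\in A\). Since \(e\) is an identity, \(ex=x\), so by Step 1
	\[
		1=\sigma(x)=\sigma(ex)=\sigma(e)\sigma(x)=\sigma(e).
	\]
Only that \(e\) acts as a left identity on a single basis element is used here.

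\emph{Step 3: contradiction.} Applying \(\sigma\) to \(i^2=-e\) and using Step 1 gives
	\[
		\sigma(i)^2=\sigma(i^2)=-\sigma(e)=-1.
	\]
This is impossible, since \(\sigma(i)\in\R\). Hence no such magma \(\mathbf{A}\) exists.

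The only step needing care is Step 2: one must rule out \(\sigma\) being the zero homomorphism. A priori \(\sigma(e)=\sigma(e)^2\) only gives \(\sigma(e)\in\{0,1\}\). The case \(\sigma(e)=0\) is excluded by evaluating on a basis element, which has coefficient sum \(1\); that is exactly where nonemptiness of \(A\), guaranteed by \(e\neq0\), enters. Everything else is routine bilinearity.
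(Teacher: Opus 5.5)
Your proof is correct and is essentially the paper's argument. Taking the \(a\)-component of an identity and summing over \(A\), as the paper does, is exactly applying your augmentation \(\sigma\); the paper gets \(\sigma(e)=1\) from \(eu=u\) with \(u=\sum_{a\in A}a\) rather than from \(ex=x\) for a single \(x\in A\), and your remark that \(e\neq0\) forces \(A\neq\emptyset\) makes explicit the nonemptiness the paper uses implicitly when it cancels \(\abs{A}\).
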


\begin{proof}
Suppose towards a contraction that \(\A\) is a magma algebra. Let \(u=\sum_{a\in A}a\). We have that \(eu=u\), so taking the \(a\) component of both sides yields
	\[
		\sum_{\mathclap{\substack{(c,d)\in A^2\\f(c,d)=a}}}e_c=1.
	\]
Summing both sides over \(A\) givens
	\[
		\sum_{\mathclap{(c,d)\in A^2}}e_c=\abs{A}
	\]
so
	\[
		\abs{A}\sum_{c\in A}e_c=\abs{A}
	\]
and hence
	\[
		\sum_{c\in A}e_c=1.
	\]

On the other hand, taking the \(a\) component of \(i^2=-e\) yields
	\[
		\sum_{\mathclap{\substack{(c,d)\in A^2\\f(c,d)=a}}}i_ci_d=-e_a.
	\]
Summing both sides over \(A\) gives
	\[
		\sum_{\mathclap{(c,d)\in A^2}}i_ci_d=-\sum_{a\in A}e_a
	\]
so
	\[
		\left(\sum_{c\in A}i_c\right)^2=-\sum_{a\in A}e_a
	\]
and thus
	\[
		\sum_{a\in A}e_a\le0.
	\]
Since we already know that \(\sum_{a\in A}e_a=1\) we find that \(1\le0\), a contradiction.
\end{proof}

This paper is motivated by the study of the following magma algebra.

\begin{figure}
	\begin{center}
		\begin{tabular}{r|ccc}
			& \(r\) & \(p\) & \(s\) \\ \hline
			\(r\) & \(r\) & \(p\) & \(r\) \\
			\(p\) & \(p\) & \(p\) & \(s\) \\
			\(s\) & \(r\) & \(s\) & \(s\)
		\end{tabular}
	\end{center}
	\caption{Rock-paper-scissors magma}
	\label{figure:rock-paper-scissors_magma}
\end{figure}

\begin{definition}[RPSions]
\label{definition:rpsions}
The \emph{RPSions} are the magma algebra \(\J=\R[\mathbf{A}]\) where \(\mathbf{A}\) is the rock-paper-scissors magma depicted in \autoref{figure:rock-paper-scissors_magma}.
\end{definition}

The symbol \(\J\) is meant as a reference to the Japanese name for the game, which is transliterated as ``jan-ken-pon''. The RPSions are a nonassociative, commutative, \(3\)-dimensional real algebra without a multiplicative identity.

\section{Holomorphic functions}
\label{section:holomorphic_functions}
We would like a notion of holomorphic function which extends the idea of a complex-differentiable function to an arbitrary finite-dimensional real algebra. The following definitions are relevant.

\begin{definition}[Algebraic directional derivative]
\label{definition:algebraic_directional_derivative}
Let \(\A\) be a finite-dimensional real algebra, let \(f\colon\A\to\A\) be a function, and fix a point \(x\in\A\) and a direction \(v\in\A\). We say that \(\delta\in\A\) is a \emph{left algebraic directional derivative} of \(f\) at \(x\) in the direction \(v\) when
	\[
		v\delta=\nabla_vf(x)
	\]
and that \(\delta\in\A\) is a \emph{right algebraic directional derivative} of \(f\) at \(x\) in the direction of \(v\) when
	\[
		\delta v=\nabla_vf(x).
	\]
We say that \(\delta\in\A\) is an \emph{algebraic directional derivative} of \(f\) at \(x\) in the direction of \(v\) when \(\delta\) is both a left and right algebraic directional derivative of \(f\) at \(x\) in the direction \(v\).
\end{definition}

The chirality in the preceding definition is motivated as follows. If we were able to divide
	\[
		v\delta=\nabla_vf(x)
	\]
on the left by \(v\) we would have an expression much like
	\[
		\delta=\lim_{h\to0}\frac{f(x+hv)-f(x)}{hv},
	\]
which is similar to the definition of the derivative of a complex function \(f\colon\C\to\C\). In particular, this looks like the expression appearing in the production of the Cauchy-Riemann equations, where \(v\) is taken to be \(1\) or \(i\). Of course we don't need to worry about the distinction between left and right division in a field like \(\C\). Even in an algebra like \(\J\), which is commutative but is not a division algebra, the above definitions collapse to a single notion of ``algebraic directional derivative''.

Note that we did not exclude the possibility that \(v=0\) in the above definition. Any \(\delta\in\A\) is a directional derivative of \(f\) at \(x\) in the direction \(0\) because we always have that \(0\delta=\delta 0=0\) and
	\[
		\nabla_0f(x)=\lim_{h\to0}\frac{f(x+h(0))-f(x)}{h}=0.
	\]

\begin{definition}[Holomorphic function]
\label{definition:holomorphic_function}
Given a finite-dimensional real algebra \(\A\) and a function \(f\colon\A\to\A\), we say that \(f\) is \emph{holomorphic} at a point \(x\in\A\) when
	\begin{enumerate}
		\item \(f\) is differentiable at \(x\) and
		\item there exists a unique \(\delta\in\A\) such that for all directions \(v\in\A\) we have that \(\delta\) is an algebraic directional derivative of \(f\) at \(x\) in the direction \(v\).
	\end{enumerate}
When \(f\) is holomorphic at \(x\) we denote this unique value \(\delta\) by \(f'(x)\).
\end{definition}

As usual in complex analysis, we will say that \(f\colon\A\to\A\) is holomorphic on an open set \(\Omega\subset\A\) when \(f\) is holomorphic at each point in \(\Omega\) and we will say that \(f\colon\A\to\A\) is holomorphic (or \emph{entire}) when \(f\) is holomorphic on all of \(\A\).

We now give some examples of holomorphic functions for various algebras.

\begin{example}[The real numbers]
\label{example:the_real_numbers}
The real numbers are commutative, so we don't need to distinguish between left and right algebraic directional derivatives. Since
	\[
		v\delta=\nabla_vf(x)
	\]
implies that
	\[
		\delta=\lim_{h\to0}\frac{f(x+hv)-f(x)}{hv}=\lim_{h\to0}\frac{f(x+h)-f(x)}{h}=f'(x)
	\]
we have that the algebraic directional derivative is just the usual derivative when \(v\neq0\). A function \(f\colon\R\to\R\) is holomorphic if and only if it is differentiable in the usual sense.
\end{example}

\begin{example}[The complex numbers]
\label{example:the_complex_numbers}
The complex numbers are commutative, so we don't need to distinguish between left and right algebraic directional derivatives. Since \(\C\) is a field we have that there exists at most one algebraic directional derivative of a function \(f\) in any nonzero direction. If \(f\colon\C\to\C\) is differentiable then \(f(x+iy)=u(x,y)+v(x,y)i\) for some differentiable functions \(u,v\colon\R^2\to\R\). If \(f\) is holomorphic in the sense of \autoref{definition:holomorphic_function} then we must have that the same value is an algebraic directional derivative of \(f\) in the directions \(1\) and \(i\). These algebraic directional derivatives are
	\begin{align*}
		\frac{\nabla_1f(x+yi)}{1} &= \lim_{h\to0}\frac{f(x+yi+h(1))-f(x+yi)}{h(1)} \\
		&= \lim_{h\to0}\frac{u(x+h,y)+v(x+h,y)i-u(x,y)-v(x,y)i}{h} \\
		&= \lim_{h\to0}\frac{u(x+h,y)-u(x,y)}{h}+i\lim_{h\to0}\frac{v(x+h,y)-v(x,y)}{h} \\
		&= \pdv{u}{x}+i\pdv{v}{x}
	\end{align*}
and
	\begin{align*}
		\frac{\nabla_if(x+yi)}{i} &= \lim_{h\to0}\frac{f(x+yi+hi)=f(x+yi)}{hi} \\
		&= -i\lim_{h\to0}\frac{u(x,y+h)+v(x,y+h)i-u(x,y)-v(x,y)i}{h} \\
		&= -i\left(\lim_{h\to0}\frac{u(x,y+h)-u(x,y+h)}{h}+i\lim_{h\to0}\frac{v(x,y+h)-v(x,y)}{h}\right) \\
		&= -i\left(\pdv{u}{y}+i\pdv{v}{y}\right) \\
		&= \pdv{v}{y}-i\pdv{u}{y}.
	\end{align*}
It is a fundamental result of complex analysis that if
	\[\pdv{u}{x}+i\pdv{v}{x}=\pdv{v}{y}-i\pdv{u}{y}\]
then all directional derivatives exist and agree\cite{stein2003}. Thus, our definition of holomorphic function becomes to the usual one from complex analysis when considered in \(\C\).

Setting the two fundamental directional derivatives equal gives a system of linear partial differential equations, the Cauchy-Riemann equations, the solutions to which are the holomorphic functions on \(\C\). These equations are
	\[
		\pdv{u}{x}=\pdv{v}{y}\text{ and }\pdv{v}{x}=-\pdv{u}{y}.
	\]
\end{example}

\begin{example}[The group ring of \(\Z/2\Z\)]
\label{example:the_group_ring_of_Z_mod_2Z}
Let \(\Z/2\Z\) be the cyclic group of order \(2\) with identity \(e\) and generator \(a\). We consider holomorphic functions on \(\A=\R[\Z/2\Z]\). Again this algebra is commutative so there is only one notion of algebraic directional derivative. Unlike our previous examples, \(\A\) is not a division ring. When we take the algebraic directional derivative in a nonzero direction \(v\in\A\), there may be \(0\) or infinitely many solutions to \(v\delta=\nabla_vf(x)\). If there are \(0\) solutions then we cannot have \(f\) holomorphic. If there are infinitely many then it is possible for \(f\) to be holomorphic.

Consider \(re+sa\in\A\) for some \(r,s\in\R\). Taking \(\set{e,a}\) as a basis, we have that multiplication by \(re+sa\) is given by the matrix
	\[
		\begin{bmatrix}r&s\\s&r\end{bmatrix}.
	\]
The determinant of this matrix is \(r^2-s^2=(r+s)(r-s)\), so if \(f\) is differentiable then there is a unique algebraic directional derivative of \(f\) in each direction except those which satisfy \(r=\pm s\).

In the case where \(r=-s\) we can take as a representative the vector \(e-a\). Let \(f\colon\A\to\A\) be given by \(f(xe+ya)=u(x,y)e+v(x,y)a\) for differentiable scalar functions \(u\) and \(v\). We would like \((e-a)\delta=\nabla_{e-a}f(xe+ya)\) to have infinitely many solutions. Observe that
	\begin{align*}
		\nabla_{e-a}f(xe+ya) &= \lim_{h\to0}\frac{f(xe+ya+h(e-a))-f(xe+ya)}{h} \\
		&= \lim_{h\to0}\frac{u(x+h,y-h)e+v(x+h,y-h)a-u(x,y)e-v(x,y)a}{h} \\
		&= \nabla_{e-a}u(x,y)e+\nabla_{e-a}v(x,y)a \\
		&= \left(\pdv{u}{x}-\pdv{u}{y}\right)e+\left(\pdv{v}{x}-\pdv{v}{y}\right)a.
	\end{align*}
Let \(\delta=pe+qa\) for some \(p,q\in\R\). The resulting system of equations for \(\delta\) is
	\[
		p-q=\pdv{u}{x}-\pdv{u}{y}
	\]
and
	\[
		-p+q=\pdv{v}{x}-\pdv{v}{y}.
	\]
This can only hold when
	\[
		\pdv{u}{x}-\pdv{u}{y}=-\pdv{v}{x}+\pdv{v}{y}.
	\]

In the case where \(r=s\) a similar analysis gives a system
	\[
		p+q=\pdv{u}{x}+\pdv{u}{y}
	\]
and
	\[
		p+q=\pdv{v}{x}+\pdv{v}{y},
	\]
implying that
	\[
		\pdv{u}{x}+\pdv{u}{y}=\pdv{v}{x}+\pdv{v}{y}.
	\]

Taken together, the system of linear PDEs
	\[
		\pdv{u}{x}-\pdv{u}{y}=-\pdv{v}{x}+\pdv{v}{y}
	\]
and
	\[
		\pdv{u}{x}+\pdv{u}{y}=\pdv{v}{x}+\pdv{v}{y}
	\]
is satisfied exactly when \(\pdv{u}{x}=\pdv{v}{y}\) and \(\pdv{u}{y}=\pdv{v}{x}\).

Along any line besides those spanned by \(e-a\) and \(e+a\), a holomorphic function must satisfy
	\[
		(re+sa)\delta=\nabla_{re+sa}f(xe+ya)
	\]
which means that we must have
	\[
		\begin{bmatrix}r&s\\s&r\end{bmatrix}\delta=J_f(xe+ya)\begin{bmatrix}r\\s\end{bmatrix}.
	\]
This means that
	\[
		\delta=\begin{bmatrix}r&s\\s&r\end{bmatrix}^{-1}\begin{bmatrix}\pdv{u}{x}&\pdv{u}{y}\\\pdv{v}{x}&\pdv{v}{y}\end{bmatrix}\begin{bmatrix}r\\s\end{bmatrix}
	\]
so
	\[
		\delta=\frac{1}{r^2-s^2}\begin{bmatrix}r&-s\\-s&r\end{bmatrix}\begin{bmatrix}\pdv{u}{x}&\pdv{u}{y}\\\pdv{v}{x}&\pdv{v}{y}\end{bmatrix}\begin{bmatrix}r\\s\end{bmatrix}.
	\]
It follows that
	\[
		\delta=\frac{1}{r^2-s^2}\begin{bmatrix}r^2\pdv{u}{x}-rs\pdv{v}{x}+rs\pdv{u}{y}-s^2\pdv{v}{y}\\-rs\pdv{u}{x}+r^2\pdv{v}{x}-s^2\pdv{u}{y}+rs\pdv{v}{y}\end{bmatrix}.
	\]
Using the necessary conditions that \(\pdv{u}{x}=\pdv{v}{y}\) and \(\pdv{u}{y}=\pdv{v}{x}\) we see that this reduces to
	\[
		\delta=\pdv{u}{x}e+\pdv{u}{y}a.
	\]
Since this value of \(\delta\) is also an algebraic directional derivative for \(f\) at \(xe+ya\) in the directions along the lines spanned by \(e-a\) and \(e+a\), it must be that this is the unique value which works for all directions. That is, a differentiable function \(f\colon\A\to\A\) is holomorphic if and only if it satisfies
	\[
		\pdv{u}{x}=\pdv{v}{y}\text{ and }\pdv{u}{y}=\pdv{v}{x},
	\]
which we may view as something like the Cauchy-Riemann equations for \(\R[\Z/2\Z]\).
\end{example}

Before moving on to the titular object of these notes and discussing holomorphic functions on the RPSions \(\J\), let us first make some general remarks about holomorphic functions over any finite-dimensional real algebra \(\A\).

We first examine the holomorphy of constant functions. Recall that \(x\in\A\) is said to \emph{annihilate} \(\A\) when for all \(y\in\A\) we have that \(xy=yx=0\).

\begin{proposition}
\label{proposition:constant_functions_holomorphic}
Let \(\A\) be a finite-dimensional real algebra. The following are equivalent:
	\begin{enumerate}
		\item All constant functions \(f\colon\A\to\A\) are holomorphic.
		\item No nonzero element of \(\A\) annihilates \(\A\).
	\end{enumerate}
\end{proposition}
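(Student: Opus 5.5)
A constant function \(f(x)=c\) is differentiable everywhere, and \(\nabla_vf(x)=0\) for every point \(x\) and direction \(v\). So by \autoref{definition:holomorphic_function}, \(f\) is holomorphic at \(x\) exactly when there is a unique \(\delta\in\A\) with \(v\delta=\delta v=0\) for all \(v\in\A\). In other words, the set
	\[
		N=\set[\delta\in\A]{v\delta=\delta v=0\text{ for all }v\in\A}
	\]
must be a singleton. The plan is to reduce both conditions of the proposition to the statement \(N=\set{0}\).

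First, \(0\in N\) always holds, since \(0\delta=\delta0=0\) by bilinearity. So existence is automatic, and uniqueness holds precisely when \(N=\set{0}\). This does not depend on \(c\) or on \(x\).

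Next, \(N\) is by definition the set of elements annihilating \(\A\). Hence \(N=\set{0}\) says exactly that no nonzero element of \(\A\) annihilates \(\A\), which is condition (2).

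For the direction (1)\(\Rightarrow\)(2), apply (1) to any single constant function, say \(f\equiv0\), at any point. This gives \(N=\set{0}\), hence (2). For (2)\(\Rightarrow\)(1), take an arbitrary constant \(c\) and point \(x\). By (2), \(N=\set{0}\), so \(\delta=0\) is the unique algebraic directional derivative in every direction, and \(f\) is holomorphic with \(f'(x)=0\).

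There is no real obstacle. The only point needing care is that holomorphy requires the \emph{uniqueness} of \(\delta\) over all directions simultaneously. That is why an annihilator produces non-uniqueness, rather than an individual direction's solution set being the issue.
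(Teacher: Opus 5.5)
Your proof is correct and is essentially the paper's argument. Both observe that \(\nabla_vf(x)=0\) for a constant \(f\), so holomorphy amounts to \(0\) being the unique \(\delta\) with \(v\delta=\delta v=0\) for all \(v\), i.e.\ to the set of elements annihilating \(\A\) being \(\{0\}\). Your set \(N\) simply packages this, and you also make explicit the differentiability of constant functions, which the paper leaves implicit.
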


\begin{proof}
Suppose that all constant functions \(f\colon\A\to\A\) are holomorphic and let \(\alpha\in\A\setminus\set{0}\). We show that \(\alpha\) cannot annihilate \(\A\). Fix a constant function \(f\colon\A\to\A\). We have that \(f\) is holomorphic, so there exists a unique \(\delta\in\A\) such that for each \(v\in\A\) we have that \(v\delta=\delta v=\nabla_vf(x)=0\). We already know a candidate for \(\delta\), namely \(0\), so there must be some \(v\in\A\) such that either \(v\alpha\neq0\) or \(\alpha v\neq0\).

Suppose instead that no nonzero element of \(\A\) annihilates \(\A\) and let \(f\colon\A\to\A\) be a constant function. This means that \(0\) is the unique \(\delta\in\A\) such that for all \(v\in\A\) we have \(v\delta=\delta v=\nabla_vf(x)=0\). Thus, \(f\) is holomorphic.
\end{proof}

Since \(\nabla_vf(x)=0\) whenever \(f\) is a constant function, the existence of a single nonzero element \(\alpha\in\A\) which annihilates \(\A\) means that no constant functions are holomorphic in \(\A\). That is, either all constant functions are holomorphic in \(\A\) (in which case they all have derivative \(0\)) or no constant function is holomorphic in \(\A\).

Certainly in \(\R\) and \(\C\) we have that all constant functions are holomorphic. In \(\R[\Z/2\Z]\) we also have that all constant functions are holomorphic. To see this, observe that left multiplication by \(re+sa\) can only be the zero linear transformation on \(\R[\Z/2\Z]\) when \(r=s=0\). Thus, \(\R[\Z/2\Z]\) has no nonzero elements which annihilate it and by the previous result indicates that all constant functions are holomorphic.

The holomorphy of the identity function also has a nice algebraic characterization. Recall that \(e\in\A\) is an \emph{identity element} for \(\A\) when \(ex=xe=x\) for all \(x\in\A\).

\begin{proposition}
\label{proposition:identity_function_holomorphic}
Let \(\A\) be a finite-dimensional real algebra. The following are equivalent:
	\begin{enumerate}
		\item The identity function \(f\colon\A\to\A\) given by \(f(x)=x\) is holomorphic.
		\item There exists an identity element \(e\in\A\).
	\end{enumerate}
\end{proposition}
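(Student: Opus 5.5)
The plan is to compute the directional derivatives of the identity function directly and observe that the holomorphy condition of \autoref{definition:holomorphic_function} translates verbatim into the defining property of an identity element. For \(f(x)=x\) and any \(x,v\in\A\) we have
	\[
		\nabla_vf(x)=\lim_{h\to0}\frac{(x+hv)-x}{h}=v,
	\]
and \(f\) is linear, hence differentiable everywhere. So \(\delta\) is an algebraic directional derivative of \(f\) at \(x\) in every direction \(v\) exactly when
	\[
		v\delta=\delta v=v\quad\text{for all }v\in\A,
	\]
that is, exactly when \(\delta\) is an identity element of \(\A\). Notably this condition does not depend on \(x\).

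For (1) implies (2), I will assume \(f\) is holomorphic at some point, say \(0\). Then \(\delta=f'(0)\) satisfies \(v\delta=\delta v=v\) for all \(v\), so \(e=\delta\) is an identity element.

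For (2) implies (1), let \(e\) be an identity element. Differentiability is already settled. For existence, \(e\) satisfies \(ve=ev=v=\nabla_vf(x)\) for every \(v\) and every \(x\). For uniqueness, suppose \(\delta\) also satisfies \(v\delta=\delta v=v\) for all \(v\). Then \(\delta\) is itself an identity element, and the standard argument gives \(\delta=\delta e=e\). Hence \(f\) is holomorphic at every \(x\), with \(f'(x)=e\).

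There is no real obstacle here. The only points needing care are the uniqueness clause in \autoref{definition:holomorphic_function}, which is handled by the uniqueness of two-sided identities, and the fact that the identity must be two-sided, which matches the requirement that \(\delta\) be both a left and a right algebraic directional derivative.
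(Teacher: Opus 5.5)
Your proof is correct and follows the paper's argument: both reduce holomorphy of the identity function to the condition \(v\delta=\delta v=v\) for all \(v\in\A\), which says exactly that \(\delta\) is a two-sided identity element. You also spell out the uniqueness clause of \autoref{definition:holomorphic_function} via \(\delta=\delta e=e\), whereas the paper's proof leaves that step implicit and only remarks afterward that the identity element is necessarily unique.
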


\begin{proof}
Suppose that the identity function \(f\) is holomorphic with \(f'(x)=\delta\). For each \(v\in\A\) we have that \(v\delta=\delta v=\nabla_vf(x)=v\). This means that any candidate \(\delta\) must be an identity element for \(\A\). Conversely, an identity element \(e\in\A\) satisfies the condition to be \(f'(x)\) for each direction \(v\in\A\).
\end{proof}

We find that whenever the identity function \(f\) is holomorphic, its derivative is \(f'(x)=e\) where \(e\in\A\) is the (necessarily unique) two-sided identity element of \(\A\). The identity function is holomorphic in \(\R\), \(\C\), and \(\R[\Z/2\Z]\). More generally, the identity function is holomorphic in any group algebra, any monoid algebra, or indeed \(\R[\mathbf{A}]\) for any unital magma \(\mathbf{A}\).

Note also that if an identity element \(e\in\A\) exists and \(\A\) is not trivial as a real vector space, then no nonzero \(\alpha\in\A\) could annihilate \(\A\), for it would have to satisfy \(\alpha e=e\alpha=0\) and also \(\alpha e=e\alpha=\alpha\). Thus, all constant functions as well as the identity function are holomorphic in any real algebra with an identity element. Moreover, these functions have their expected derivatives in analogy with real and complex analysis.

Now that we have some handle on when constant functions and the identity function are holomorphic, we consider sums, scalar multiples, and products of functions, with the goal of understanding when polynomial functions are holomorphic.

Given an element \(\alpha\in\A\), let \(\lambda_\alpha\colon\A\to\A\) be given by \(\lambda_\alpha(x)=\alpha x\) and let \(\rho_\alpha\colon\A\to\A\) be given by \(\rho_\alpha(x)=x\alpha\). We say that \(\alpha\) is \emph{divisible} in \(\A\) when at least one of \(\lambda_\alpha\) and \(\rho_\alpha\) is invertible. Note that \(0\) can only be divisible when \(\A\) is the trivial real vector space.

\begin{proposition}
\label{proposition:sum_of_holomorphic_functions}
Suppose that \(\A\) is a finite-dimensional real algebra which has a divisible element \(\alpha\). Given holomorphic functions \(f,g\colon\A\to\A\) we have that the sum \(f+g\colon\A\to\A\) given by \((f+g)(x)=f(x)+g(x)\) is holomorphic with \((f+g)'(x)=f'(x)+g'(x)\).
\end{proposition}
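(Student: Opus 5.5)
We check the two conditions of \autoref{definition:holomorphic_function} for \(f+g\) at a fixed point \(x\in\A\).

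\emph{Differentiability and existence.} Since \(f\) and \(g\) are differentiable at \(x\), so is \(f+g\), with \(\nabla_v(f+g)(x)=\nabla_vf(x)+\nabla_vg(x)\) for every \(v\in\A\). Set \(\delta=f'(x)+g'(x)\). By bilinearity of multiplication,
\[
	v\delta=vf'(x)+vg'(x)=\nabla_vf(x)+\nabla_vg(x)=\nabla_v(f+g)(x),
\]
and the same computation gives \(\delta v=\nabla_v(f+g)(x)\). So \(\delta\) is an algebraic directional derivative of \(f+g\) at \(x\) in every direction \(v\).

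\emph{Uniqueness.} This is the only step that needs the divisible element \(\alpha\). Suppose \(\delta'\) also works for every direction. Then \(\epsilon=\delta-\delta'\) satisfies \(v\epsilon=\epsilon v=0\) for all \(v\in\A\), so \(\epsilon\) annihilates \(\A\). We need to conclude \(\epsilon=0\).

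\begin{itemize}
	\item If \(\lambda_\alpha\) is invertible, then \(\lambda_\alpha(\epsilon)=\alpha\epsilon=0\), so \(\epsilon=0\).
	\item If \(\rho_\alpha\) is invertible, then \(\rho_\alpha(\epsilon)=\epsilon\alpha=0\), so \(\epsilon=0\).
\end{itemize}

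In either case \(\epsilon=0\); equivalently, no nonzero element annihilates \(\A\). This is precisely condition (2) of \autoref{proposition:constant_functions_holomorphic}, though we do not need that result itself.

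Hence \(\delta\) is unique, \(f+g\) is holomorphic at \(x\), and \((f+g)'(x)=f'(x)+g'(x)\). Since \(x\) was arbitrary, \(f+g\) is holomorphic.

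The hypothesis is essential: in an algebra with a nonzero annihilator, uniqueness would fail.
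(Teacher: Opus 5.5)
Your proof is correct and follows essentially the same route as the paper: existence of \(f'(x)+g'(x)\) as an algebraic directional derivative in every direction by linearity of \(\nabla_v\) and bilinearity of multiplication, then uniqueness from invertibility of \(\lambda_\alpha\) or \(\rho_\alpha\) in the direction \(\alpha\). Phrasing uniqueness through the difference \(\epsilon=\delta-\delta'\) annihilating \(\A\) is just an explicit restatement of the paper's remark that \(\alpha\delta=\nabla_\alpha(f+g)(x)\) or \(\delta\alpha=\nabla_\alpha(f+g)(x)\) has a unique solution.
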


\begin{proof}
An algebraic directional derivative \(\delta\) of \(f+g\) in the direction \(v\) must solve \(v\delta=\delta v=\nabla_v(f+g)(x)\). Since \(\nabla_v(f+g)(x)=\nabla_vf(x)+\nabla_vg(x)\) we consider solutions to \(v\delta=\delta v=\nabla_vf(x)+\nabla_vg(x)\). We already know that \(\delta=f'(x)+g'(x)\) is a candidate since \(vf'(x)=f'(x)v=\nabla_vf(x)\) and \(vg'(x)=g'(x)v=\nabla_vg(x)\).

Consider the direction \(\alpha\). Either \(\alpha\delta=\nabla_v(f+g)(x)\) has a unique solution or \(\delta\alpha=\nabla_v(f+g)(x)\) has a unique solution. In any case, this shows that our candidate \(f'(x)+g'(x)\) must indeed be the unique algebraic directional derivative in the direction \(\alpha\), which means that \(f+g\) is holomorphic with \((f+g)'(x)=f'(x)+g'(x)\).
\end{proof}

The identity element of a real algebra \(\A\) is always divisible, if it exists, so any real algebra with an identity element has that sums of holomorphic functions are holomorphic (with the expected derivative).

Scalar multiplication may be handled similarly.

\begin{proposition}
\label{proposition:scalar_multiple_holomorphic_function}
Suppose that \(\A\) is a finite-dimensional real algebra which has a divisible element \(\alpha\), that \(f\colon\A\to\A\) is holomorphic, and that \(c\in\R\). We have that the function \(cf\colon\A\to\A\) given by \((cf)(x)=cf(x)\) is holomorphic with \((cf)'(x)=cf'(x)\).
\end{proposition}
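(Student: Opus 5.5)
The plan mirrors the proof of \autoref{proposition:sum_of_holomorphic_functions}: first exhibit \(cf'(x)\) as an algebraic directional derivative of \(cf\) at \(x\) in every direction, then use the divisible element \(\alpha\) to show it is the only such element. Differentiability of \(cf\) at \(x\) is immediate, since \(cf\) is the composite of \(f\) with the linear map \(y\mapsto cy\). Its directional derivatives are obtained by pulling the constant out of the limit:
	\[
		\nabla_v(cf)(x)=\lim_{h\to0}\frac{cf(x+hv)-cf(x)}{h}=c\nabla_vf(x).
	\]

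For existence, I would take \(\delta=cf'(x)\) as the candidate. Multiplication in \(\A\) is bilinear, so real scalars pass through products. Hence for every \(v\in\A\),
	\[
		v(cf'(x))=c\,(vf'(x))=c\nabla_vf(x)=\nabla_v(cf)(x),
	\]
and in the same way \((cf'(x))v=c\,(f'(x)v)=\nabla_v(cf)(x)\). So \(cf'(x)\) is a two-sided algebraic directional derivative of \(cf\) at \(x\) in every direction \(v\).

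For uniqueness, suppose \(\delta\) is an algebraic directional derivative of \(cf\) at \(x\) in all directions. Specialize to the direction \(v=\alpha\), which gives \(\alpha\delta=\delta\alpha=\nabla_\alpha(cf)(x)\). Since \(\alpha\) is divisible, at least one of \(\lambda_\alpha\) or \(\rho_\alpha\) is invertible. If \(\lambda_\alpha\) is invertible then \(\delta=\lambda_\alpha^{-1}(\nabla_\alpha(cf)(x))\), and if \(\rho_\alpha\) is invertible then \(\delta=\rho_\alpha^{-1}(\nabla_\alpha(cf)(x))\). In either case \(\delta\) is determined, and since \(cf'(x)\) is a solution it must equal \(cf'(x)\). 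This holds at every \(x\), so \(cf\) is holomorphic with \((cf)'(x)=cf'(x)\).

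There is no real obstacle here. The only point that needs care is uniqueness, and in particular the case \(c=0\). There \(cf\) is constant, and without a divisible element it could fail to be holomorphic, as \autoref{proposition:constant_functions_holomorphic} shows. The divisibility hypothesis handles this case uniformly with the others, since the uniqueness argument never uses \(c\neq0\).
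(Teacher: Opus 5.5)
Your proof is correct and is essentially the paper's argument: bilinearity shows \(cf'(x)\) is a two-sided algebraic directional derivative of \(cf\) in every direction, and specializing to the divisible direction \(\alpha\) gives uniqueness. One small correction to your closing remark: without a divisible element, \(0\cdot f\) could not actually fail to be holomorphic, because uniqueness of \(f'(x)\) already forces that no nonzero element annihilates \(\A\), so the zero function is holomorphic by \autoref{proposition:constant_functions_holomorphic}; divisibility is a convenience for the argument, not something the \(c=0\) case needs.
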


\begin{proof}
Since \(f\) is holomorphic there exists a unique \(\delta\) (namely \(f'(x)\)) such that for all \(v\in\A\) we have \(v\delta=\delta v=\nabla_vf(x)\). Since \(\nabla_v(cf)(x)=c\nabla_vf(x)\) we find that \(cf'(x)\) is a solution to \(v\delta=\delta v=\nabla_v(cf)(x)\). Since we can take \(v\) to be the divisible element \(\alpha\), this must be the only solution which works for all \(v\).
\end{proof}

The preceding argument almost goes through without the assumption of a divisible element, but the case \(c=0\) prevents us from concluding that any solution to \(v\delta=\delta v=\nabla_vf(x)\) gives us a solution to \(v\delta=\delta v=\nabla_v(cf)(x)\) and vice versa. In any case, we now see that, at least under the assumption of a divisible element, we have that the holomorphic functions on \(\A\) form a real vector space under the usual addition and scalar multiplication.

Before proceeding to products of functions, we generalize from \autoref{proposition:identity_function_holomorphic} to consider the holomorphy of functions of the form \(\lambda_a\colon\A\to\A\) given by \(\lambda_a(x)=ax\). We call \(a\in\A\) \emph{central} when for all \(x\in\A\) we have that \(ax=xa\). That is, a central element commutes with all other elements under multiplication.

\begin{proposition}
\label{proposition:scalar_multiplication_holomorphic}
Suppose that \(\A\) contains a divisible element and let \(a\in\A\). We have that \(\lambda_a\colon\A\to\A\) is holomorphic if and only if \(a\) is central.
\end{proposition}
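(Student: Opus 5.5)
Since \(\lambda_a\) is linear, it is differentiable everywhere and its directional derivatives are \(\nabla_v\lambda_a(x)=av\) for every \(x,v\in\A\). Holomorphy of \(\lambda_a\) at \(x\) therefore means: there is a unique \(\delta\in\A\) such that
	\[
		v\delta=\delta v=av\quad\text{for all }v\in\A.
	\]
The proof reduces to analysing this purely algebraic condition, with the divisible element \(\alpha\) supplying uniqueness. The point \(x\) plays no role, so holomorphy at one point is equivalent to holomorphy everywhere.

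\emph{Central implies holomorphic.} Suppose \(a\) is central and take \(\delta=a\). Then \(v\delta=va=av\) by centrality, and \(\delta v=av\) trivially, so \(a\) is an algebraic directional derivative in every direction. For uniqueness, specialise to \(v=\alpha\). If \(\lambda_\alpha\) is invertible, the equation \(\alpha\delta=a\alpha\) has exactly one solution. If instead \(\rho_\alpha\) is invertible, the equation \(\delta\alpha=a\alpha\) has exactly one solution. Either way, any \(\delta\) working for all directions must equal \(a\). This is the same argument as in \autoref{proposition:sum_of_holomorphic_functions} and \autoref{proposition:scalar_multiple_holomorphic_function}. Hence \(\lambda_a\) is holomorphic with \(\lambda_a'(x)=a\).

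\emph{Holomorphic implies central.} Suppose \(\lambda_a\) is holomorphic with derivative \(\delta\), so \(v\delta=\delta v=av\) for all \(v\). The goal is to show \(\delta=a\); then \(va=av\) for all \(v\), which is exactly centrality. Again specialise to \(v=\alpha\).
\begin{itemize}
	\item If \(\rho_\alpha\) is invertible, then \(\delta\alpha=a\alpha\) gives \(\rho_\alpha(\delta)=\rho_\alpha(a)\), so \(\delta=a\) immediately.
	\item If only \(\lambda_\alpha\) is invertible, we have \(\alpha\delta=a\alpha\) and \(\delta\alpha=a\alpha\), so \(\alpha\delta=\delta\alpha\). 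This alone does not yield \(\delta=a\).
\end{itemize}

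The second case is the main obstacle: \(\delta v=av\) for all \(v\) only says that \(\delta-a\) left-annihilates \(\A\). To close it I would try two things.
\begin{itemize}
	\item \textbf{Use both identities with general \(v\).} Write \(b=\delta-a\), so \(bv=0\) for all \(v\). Then \(va=v\delta-vb\), and the aim is to show \(vb=0\), or directly that \(b=0\), using the invertibility of \(\lambda_\alpha\), perhaps by substituting \(v=\alpha w\) and comparing.
	\item \textbf{Weaken the target if that fails.} If no contradiction comes out, I would check whether the intended conclusion is really that \(\delta\) is central with \(\delta v=av\). Alternatively, I would note that the paper's usage allows either side convention, so the statement may implicitly rely on the divisibility being on the side matching \(\lambda_a\).
\end{itemize}
Concretely, I would first attempt to derive \(b\alpha=\alpha b\) together with \(b\alpha=0\). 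This would give \(\lambda_\alpha(b)=0\), hence \(b=0\) by invertibility of \(\lambda_\alpha\), which settles the case. From \(\alpha\delta=\delta\alpha\) and \(\delta\alpha=a\alpha\) we get \(\alpha\delta=a\alpha\) and \(b\alpha=0\), but \(\alpha b=\alpha\delta-\alpha a=a\alpha-\alpha a\). So the crux is showing that \(a\) commutes with \(\alpha\), which I expect to be the hardest step.
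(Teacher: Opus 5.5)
You prove the implication central \(\Rightarrow\) holomorphic correctly, and also the converse when some \(\rho_\alpha\) is invertible; that is exactly the paper's argument. The case you left open, where only \(\lambda_\alpha\) is invertible, is a genuine gap, but it cannot be closed. The step you single out as the crux, \(a\alpha=\alpha a\), fails in general, and with it the ``only if'' direction of the proposition.

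Take \(\A=\R^2\) with basis \(e_1,e_2\) and multiplication \(xy=x_1y\) for \(x=x_1e_1+x_2e_2\). This is bilinear, and even associative. Then \(\lambda_{e_1}\) is the identity, so \(e_1\) is divisible, while \(e_2\alpha=0\) for every \(\alpha\), so no \(\rho_\alpha\) is invertible. The map \(\lambda_{e_2}\) is identically \(0\). The element \(\delta=0\) satisfies \(v\delta=\delta v=0=\nabla_v\lambda_{e_2}(x)\) for all \(v\), and it is the only such \(\delta\) since \(e_1\delta=\delta\). So \(\lambda_{e_2}\) is holomorphic. Yet \(e_2e_1=0\neq e_2=e_1e_2\), so \(e_2\) is not central. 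In your notation, \(\delta=0\) and \(b=-e_2\) is a nonzero left annihilator with \(\alpha b=a\alpha-\alpha a=-e_2\neq0\).

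The paper's proof has the same hole. It argues that \(\delta=a\) solves \(\delta v=av\) and is the only choice ``since we can take \(v\) to be a divisible element''. That step uses injectivity of \(\rho_\alpha\), so it covers only the case you completed.

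Your observation that \(\delta-a\) left-annihilates \(\A\) actually yields the following, in any \(\A\) with a divisible element. \(\lambda_a\) is holomorphic if and only if \(a=c+b\) with \(c\) central and \(bv=0\) for all \(v\); equivalently, \(\lambda_a=\lambda_c\) for some central \(c\). In that case \(\lambda_a'(x)=c\), with uniqueness coming from the divisible element on either side. This reduces to ``\(a\) central'' exactly when \(\A\) has no nonzero left annihilator, for instance when some \(\rho_\alpha\) is invertible.

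So rather than hunting for a proof of \(a\alpha=\alpha a\), you should add that hypothesis (or prove the corrected statement) and record the counterexample. The direction central \(\Rightarrow\) holomorphic, which is the one the paper uses later, is unaffected.
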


\begin{proof}
Note that \(\nabla_v\lambda_a(x)=av\) so \(v\delta=\delta v=av\) must hold in order for \(\delta\) to be an algebraic directional derivative for \(\lambda_a\). Certainly a solution to \(\delta v=av\) is \(\delta=a\). Since we can take \(v\) to be a divisible element we have that this is the only choice of \(\delta\) that works for all \(v\). It follows that \(va=av\) for all \(v\), so \(a\) is central.

On the other hand, a central element \(a\) would certainly solve \(v\delta=\delta v=av\) for \(\delta\). This candidate would then be the only one which works for all \(v\) under the assumption that \(\A\) contains a divisible element.
\end{proof}

Note that under the assumption that \(\A\) has a divisible element we find that for each central \(a\in\A\) we have that \(\lambda_a=\rho_a\) and that this function is holomorphic with \(\lambda_a'(x)=a\) for all \(x\in\A\). In the case that an identity element exists, the preceding result specializes to \autoref{proposition:identity_function_holomorphic} when we consider the identity function \(\lambda_e(x)=x\).

In order to state our result on the Product Rule, we need two more algebraic notions. Given \(a\in\A\) we say that \(a\) is \emph{middle nuclear} when for all \(x,y\in\A\) we have that \(x(ay)=(xa)y\) and we say that \(a\in\A\) is \emph{communal} when for all \(x,y\in\A\) we have that \(x(ay)=(ay)x\).

\begin{proposition}
\label{proposition:product_of_holomorphic_functions}
Suppose that \(\A\) contains a divisible element. The following are equivalent:
	\begin{enumerate}
		\item The Product Rule holds in \(\A\). That is, for all holomorphic functions \(f,g\colon\A\to\A\) we have that the function \(fg\colon\A\to\A\) given by \((fg)(x)=f(x)g(x)\) is again holomorphic with \((fg)'(x)=f'(x)g(x)+f(x)g'(x)\).
		\item Every central element of \(\A\) is middle nuclear and communal.
	\end{enumerate}
\end{proposition}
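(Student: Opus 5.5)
The plan is to handle the two directions separately, using throughout one observation about \autoref{definition:holomorphic_function}. If \(f\) is holomorphic then \(vf'(x)=f'(x)v\) for every \(v\in\A\), so every value \(f'(x)\) is central. A second remark is that a divisible element \(\alpha\) rules out nonzero annihilators of \(\A\): if \(z\neq0\) annihilated \(\A\), then \(\lambda_\alpha(z)=\rho_\alpha(z)=0\), so neither map would be invertible. Hence, by \autoref{proposition:constant_functions_holomorphic}, all constant functions are holomorphic with derivative \(0\).

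For (1)\(\Rightarrow\)(2), fix a central \(a\in\A\) and an arbitrary \(y\in\A\). By \autoref{proposition:scalar_multiplication_holomorphic}, \(f=\lambda_a\) is holomorphic with \(f'=a\). Take \(g\) to be the constant function with value \(y\), which is holomorphic with \(g'=0\). Then \((fg)(x)=(ax)y\), and since multiplication is bilinear, \(\nabla_v(fg)(x)=(av)y\). The Product Rule says \((fg)'(x)=ay+(ax)\cdot0=ay\) is an algebraic directional derivative in every direction, so
	\[
		v(ay)=(ay)v=(av)y\quad\text{for all }v\in\A.
	\]
The first equality is exactly communality of \(a\). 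Since \(a\) is central, \((av)y=(va)y\), and the equality \(v(ay)=(va)y\) is then exactly middle nuclearity of \(a\).

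For (2)\(\Rightarrow\)(1), let \(f,g\) be holomorphic and abbreviate \(f=f(x)\), \(f'=f'(x)\), and similarly for \(g\). By differentiability and bilinearity,
	\[
		\nabla_v(fg)(x)=(\nabla_vf)g+f(\nabla_vg)=(vf')g+f(vg').
	\]
I would then check that \(\delta=f'g+fg'\) satisfies \(v\delta=\delta v=\nabla_v(fg)(x)\), using only that \(f'\) and \(g'\) are central, middle nuclear and communal. For the first term, middle nuclearity of \(f'\) gives \(v(f'g)=(vf')g\), and communality of \(f'\) gives \(v(f'g)=(f'g)v\). For the second term, centrality gives \(fg'=g'f\). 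Then middle nuclearity gives \(v(g'f)=(vg')f=(g'v)f\), and communality of \(g'\) gives \(f(g'v)=(g'v)f\), that is \(f(vg')=(g'v)f\). Communality also gives \(v(g'f)=(g'f)v\). Adding these identities yields both \(v\delta=\nabla_v(fg)(x)\) and \(\delta v=\nabla_v(fg)(x)\). Uniqueness of \(\delta\) follows, as in \autoref{proposition:sum_of_holomorphic_functions}, by taking \(v\) to be the divisible element, for which the relevant equation has a unique solution.

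The main obstacle is the bookkeeping in the second term \(f(vg')\). There the derivative \(g'\) sits in the right-hand factor, so the axioms, which are stated with the special element in the middle or left, have to be applied in the right order: centrality first to move \(g'\), then middle nuclearity, then communality. Differentiability of \(fg\) is routine from differentiability of \(f,g\) and bilinearity of the multiplication.
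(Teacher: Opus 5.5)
Your proposal is correct and follows essentially the same route as the paper. For (1)\(\Rightarrow\)(2) you use the same test pair \(f=\lambda_a\), \(g\equiv y\). For (2)\(\Rightarrow\)(1) you verify that \(f'(x)g(x)+f(x)g'(x)\) solves \(v\delta=\delta v=\nabla_v(fg)(x)\), using that the derivatives are central, middle nuclear and communal, and get uniqueness from the divisible element. Your explicit argument that a divisible element rules out nonzero annihilators (so constants are holomorphic), and your step-by-step handling of the term \(f(x)(vg'(x))\), fill in steps the paper leaves implicit.
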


\begin{proof}
Suppose first that the Product Rule holds in \(\A\) and that \(a\in\A\) is central. Let \(b\in\A\). Since \(\A\) contains a divisible element we have that constant functions are all holomorphic with derivative \(0\). Let \(f(x)=ax\) and let \(g(x)=b\). Since the Product Rule holds we have that
	\[
		(fg)'(x)=f'(x)g(x)+f(x)g'(x)=ab+(ax)0=ab.
	\]
This quantity must be a solution \(\delta\) to
	\[
		v\delta=\delta v=\nabla_v(fg)(x)=\left(\nabla_vf(x)\right)g(x)+f(x)\nabla_vg(x).
	\]
We know that \(\nabla_vf(x)=vf'(x)=va\) and that \(\nabla_vg(x)=vg'(x)=0\) so we find that
	\[
		v(ab)=(ab)v=(va)b+(ax)0=(va)b.
	\]
Since \(b\) and \(v\) were taken arbitrarily and we have seen that \(v(ab)=(va)b\), the element \(a\) must be middle nuclear. Similarly, we have seen that \(v(ab)=(ab)v\), so \(a\) must be communal.

For the other direction, suppose that every central element of \(\A\) is middle nuclear and communal. Let \(f,g\colon\A\to\A\) be holomorphic. Note that, as before, \(\nabla_vf(x)=vf'(x)\) and \(\nabla_vg(x)=vg'(x)\). Moreover, \(f'(x)\) and \(g'(x)\) are central as they must solve \(v\delta=\delta v\) for all \(v\). We consider solutions to
	\[
		v\delta=\delta v=\nabla_v(fg)(x)=\left(\nabla_vf(x)\right)g(x)+f(x)\nabla_vg(x).
	\]
This means we want to solve
	\[
		v\delta=\delta v=(vf'(x))g(x)+f(x)(vg'(x)).
	\]
Since \(f'(x)\) and \(g'(x)\) are central, they must be middle nuclear and communal by assumption, so
	\begin{align*}
		(vf'(x))g(x)+f(x)(vg'(x)) &= v(f'(x)g(x))+f(x)(g'(x)v) \\
		&= v(f'(x)g(x))+(f(x)g'(x))v \\
		&= v(f'(x)g(x)+f(x)g'(x)) \\
		&= (f'(x)g(x)+f(x)g'(x))v.
	\end{align*}
This means that \(f'(x)g(x)+f(x)g'(x)\) is a solution \(\delta\) to \(v\delta=\delta v=\nabla_v(fg)(x)\). Since \(\A\) contains a divisible element, it must be the only one.
\end{proof}

We can make a stronger statement when \(\A\) has an identity element.

\begin{corollary}
\label{corollary:product_of_holomorphic_functions_identity_element}
Suppose that \(\A\) contains an identity element. The following are equivalent:
	\begin{enumerate}
		\item The Product Rule holds in \(\A\).
		\item The real algebra \(\A\) is commutative and associative.
	\end{enumerate}
\end{corollary}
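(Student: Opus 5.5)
The plan is to reduce the corollary to \autoref{proposition:product_of_holomorphic_functions}. The identity element \(e\) is divisible, since \(\lambda_e\) is the identity map. Hence the Product Rule holds in \(\A\) if and only if every central element of \(\A\) is middle nuclear and communal. It therefore suffices to show that, in the presence of an identity, this algebraic condition is equivalent to \(\A\) being commutative and associative.

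For the direction from (2) to (1), assume \(\A\) is commutative and associative. Then every element is central, middle nuclear (by associativity, \(x(ay)=(xa)y\)) and communal (by commutativity, \(x(ay)=(ay)x\)). The proposition then gives the Product Rule.

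For the direction from (1) to (2), assume every central element is middle nuclear and communal. The key observation is that \(e\) itself is central, since \(ex=x=xe\). Applying the hypothesis to \(a=e\) gives two things.
\begin{itemize}
\item Communality of \(e\) says \(x(ey)=(ey)x\), that is, \(xy=yx\) for all \(x,y\). So \(\A\) is commutative.
\item Middle nuclearity of \(e\) gives only \(xy=xy\), which is vacuous. So associativity needs another argument.
\end{itemize}

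Once commutativity is known, every element of \(\A\) is central. The hypothesis then makes every \(a\in\A\) middle nuclear, so \(x(ay)=(xa)y\) for all \(x,a,y\), which is exactly associativity. This completes (1) implies (2).

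The only step needing care is this bootstrapping. The identity yields commutativity directly, and commutativity then upgrades the hypothesis on central elements to all of \(\A\), which yields associativity. Everything else is immediate from \autoref{proposition:product_of_holomorphic_functions}.
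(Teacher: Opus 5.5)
Your proposal is correct and is essentially the paper's proof. Communality of the central identity \(e\) gives commutativity, which makes every element central and hence middle nuclear, giving associativity; the converse is a direct application of \autoref{proposition:product_of_holomorphic_functions}, and your explicit remark that \(e\) is divisible (needed for that proposition in the forward direction) is a detail the paper leaves implicit.
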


\begin{proof}
Suppose that the Product Rule holds in \(\A\) and let \(e\) be the identity element of \(\A\). We have by \autoref{proposition:product_of_holomorphic_functions} that every central element of \(\A\) is middle nuclear and communal. The element \(e\) is central and is therefore communal. This means that for all \(x,y\in\A\) we have that \(x(ey)=(ey)x\) and thus \(xy=yx\). This shows that \(\A\) is commutative, whence all elements are central. Every element of \(\A\) is therefore middle nuclear, which means that for all \(x,y,z\in\A\) we have that \(x(yz)=(xy)z\). That is, \(\A\) is associative.

Suppose now that \(\A\) is a unital, commutative, associative real algebra. Certainly \(\A\) contains a divisible element (the identity) and all central elements of \(\A\) are middle nuclear and communal since all elements of \(\A\) are middle nuclear and communal. We have by \autoref{proposition:product_of_holomorphic_functions} that the Product Rule holds in \(\A\).
\end{proof}

When the conditions in \autoref{corollary:product_of_holomorphic_functions_identity_element} hold, we have that all constants as well as the identity function are holomorphic. We also have that the set of holomorphic functions is closed under addition and multiplication. It follows that all polynomial functions \(f(x)=\sum_{i=0}^na_ix^i\) with coefficients in \(\A\) are holomorphic and that their derivatives are as expected from experience with \(\R\) and \(\C\).

When the weaker conditions in \autoref{proposition:product_of_holomorphic_functions} hold, it takes more effort to describe the class of polynomials which are holomorphic. Let \(C_\A\) be the set of all central elements of a real algebra \(\A\) containing a divisible element. Suppose that in this algebra the Product Rule holds. We have that all constant functions are holomorphic, as are all the functions \(\lambda_a(x)=ax\) for some \(a\in C_\A\). We also know that the set of holomorphic functions is closed under addition and multiplication. Since we cannot assume that multiplication is associative in \(\A\), we can only state that the polynomials which are holomorphic in \(\A\) include those which can be inductively constructed from the constants and the \(\lambda_a\) by repeated addition and multiplication.

By our definition of an algebraic directional derivative, we must have that \(f'(x)\in C_\A\) for any holomorphic \(f\colon\A\to\A\) and any \(x\in\A\). There are two extremes where \autoref{proposition:product_of_holomorphic_functions} holds. One is where there are no central elements besides \(0\). In that case, the only functions which are differentiable are constants, which are certainly closed under multiplication. The other extreme is the case where every element is central. In that case \(\A\) is commutative and must be associative as well. Note that it is not necessary for \(\A\) to have an identity element in that case, but if it does then this is the situation described in \autoref{corollary:product_of_holomorphic_functions_identity_element}.

We would be remiss not to examine the Chain Rule in a generalization of elementary calculus. This rule also has an algebraic characterization and it is similar to that for the Product Rule. Given \(a\in\A\) we say that \(a\) is \emph{weakly middle nuclear} when for all \(x\in\A\) and all \(y\in C_\A\) we have that \(x(ay)=(xa)y\) and we say that \(a\in\A\) is \emph{weakly communal} when for all \(x\in\A\) and all \(y\in C_\A\) we have that \(x(ay)=(ay)x\).

\begin{proposition}
\label{proposition:chain_rule_holomorphic_functions}
Suppose that \(\A\) contains a divisible element. The following are equivalent:
	\begin{enumerate}
		\item The Chain Rule holds in \(\A\). That is, for all holomorphic functions \(f,g\colon\A\to\A\) we have that the function \(g\circ f\colon\A\to\A\) given by \((g\circ f)(x)=g(f(x))\) is again holomorphic with \((g\circ f)'(x)=f'(x)g'(f(x))\).
		\item Every central element of \(\A\) is weakly middle nuclear and weakly communal.
	\end{enumerate}
\end{proposition}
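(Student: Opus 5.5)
The plan is to follow the proof of \autoref{proposition:product_of_holomorphic_functions}. The central tool is the ordinary Chain Rule for Jacobians, combined with the fact that derivatives of holomorphic functions are central. Throughout, uniqueness of algebraic directional derivatives comes from the divisible element.

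For (2) implies (1), let \(f,g\colon\A\to\A\) be holomorphic and fix \(x\in\A\). Since \(f\) and \(g\) are differentiable, \(g\circ f\) is differentiable. By the real Chain Rule,
\[
\nabla_v(g\circ f)(x)=J_g(f(x))J_f(x)v=J_g(f(x))\nabla_vf(x).
\]
Put \(w=\nabla_vf(x)=vf'(x)\). Then \(J_g(f(x))w=\nabla_wg(f(x))=wg'(f(x))\), so
\[
\nabla_v(g\circ f)(x)=(vf'(x))g'(f(x)).
\]
Write \(a=f'(x)\) and \(y=g'(f(x))\). Both are central, since each solves \(v\delta=\delta v\) for all \(v\).

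By hypothesis \(a\) is weakly middle nuclear and weakly communal, and \(y\in C_\A\). Hence
\[
(va)y=v(ay)=(ay)v.
\]
So \(\delta=f'(x)g'(f(x))\) is an algebraic directional derivative of \(g\circ f\) at \(x\) in every direction \(v\). To get uniqueness, take \(v\) to be the divisible element \(\alpha\). Then one of \(\alpha\delta=\nabla_\alpha(g\circ f)(x)\) or \(\delta\alpha=\nabla_\alpha(g\circ f)(x)\) has a unique solution. Therefore \(g\circ f\) is holomorphic with the stated derivative.

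For (1) implies (2), let \(a\in C_\A\), \(y\in C_\A\) and \(v\in\A\). By \autoref{proposition:scalar_multiplication_holomorphic}, \(f=\lambda_a\) and \(g=\lambda_y\) are holomorphic, with \(f'\equiv a\) and \(g'\equiv y\). Here \(g\circ f(x)=y(ax)\), which is linear in \(x\), so
\[
\nabla_v(g\circ f)(x)=y(av).
\]
Because \(y\) and \(a\) are central, this equals \((av)y=(va)y\). The Chain Rule says \(ay\) is an algebraic directional derivative in direction \(v\), so
\[
v(ay)=(ay)v=(va)y.
\]
Since \(v\) and \(y\in C_\A\) were arbitrary, \(a\) is weakly middle nuclear and weakly communal.

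The only delicate step is the Jacobian computation in the first direction. There, the identity \(J_g(f(x))w=wg'(f(x))\) must be applied with \(w=vf'(x)\), the image of \(v\) under \(J_f(x)\), rather than with \(v\) itself. The rest is the same bookkeeping with centrality as in \autoref{proposition:product_of_holomorphic_functions}.
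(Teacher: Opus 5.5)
Your proof is correct and follows the paper's argument. The Jacobian chain rule gives \(\nabla_v(g\circ f)(x)=(vf'(x))g'(f(x))\); centrality of \(f'(x)\) and \(g'(f(x))\) together with the weak conditions makes \(f'(x)g'(f(x))\) an algebraic directional derivative in every direction, with uniqueness from the divisible element; and the converse tests \(\lambda_a\) and \(\lambda_b\) for central \(a,b\). The only cosmetic difference is that you compute \(\nabla_v(\lambda_y\circ\lambda_a)(x)=y(av)\) directly and rewrite it as \((va)y\) using centrality, whereas the paper reads \((va)b\) off the general formula.
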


\begin{proof}
By the Chain Rule for the Jacobian we have that
	\begin{align*}
		\nabla_v(g\circ f)(x) &= J_{g\circ f}(x)v \\
		&= J_g(f(x))J_f(x)v \\
		&= J_g(f(x))\nabla_vf(x) \\
		&= \nabla_{\nabla_vf(x)}g(f(x)).
	\end{align*}
Since \(vf'(x)=\nabla_vf(x)\) and \(vg'(x)=\nabla_vg(x)\) we have that
	\[
		\nabla_v(g\circ f)(x)=\nabla_{vf'(x)}g(f(x))=(vf'(x))g'(f(x)).
	\]
If the Chain Rule holds then we must have that
	\[
		v(f'(x)g'(f(x)))=(vf'(x))g'(f(x))
	\]
Let \(a\in\A\) be central. We have that \(f(x)=ax\) is holomorphic. Let \(b\in C_\A\) and take \(g(x)=bx\), which is also holomorphic. By the Chain Rule we have for any \(v\in\A\) that
	\[
		v(ab)=(va)b,
	\]
so \(a\) is weakly middle nuclear. Since we have that
	\[
		v(f'(x)g'(f(x)))=(f'(x)g'(f(x)))v
	\]
it must be that
	\[
		v(ab)=(ab)v,
	\]
so \(a\) is weakly communal.

Suppose instead that every central element of \(\A\) is weakly middle nuclear and weakly communal. Since \(f'(x)\) satisfies \(vf'(x)=f'(x)v\) for all \(v\in\A\), we have that \(f'(x)\) is central and hence weakly middle nuclear and weakly communal by assumption. Similarly, \(g'(f(x))\) must be central in \(\A\). One solution to \(v\delta=\delta v=\nabla_v(g\circ f)(x)\) is \(\delta=f'(x)g'(f(x))\) for we have that
	\[
		v\delta=v(f'(x)g'(f(x)))=(vf'(x))g'(f(x))=\nabla_v(g\circ f)(x)
	\]
and
	\[
		\delta v=(f'(x)g'(f(x)))v=v(f'(x)g'(f(x)))=(vf'(x))g'(f(x))=\nabla_v(g\circ f)(x).
	\]
Since \(\A\) contains a divisible element, this must be the only solution.
\end{proof}

Since the Chain Rule is equivalent to an algebraic condition which is a weaker version of the algebraic condition for the Product Rule, we see that, under the assumption of a divisible element, any real algebra where the Product Rule holds must satisfy the Chain Rule, as well.

Our final general remarks on holomorphic functions concern the analogue of the Cauchy-Riemann equations. In \autoref{example:the_group_ring_of_Z_mod_2Z} we found a system of linear partial differential equations which were both necessary and sufficient for a differentiable function on \(\R[\Z/2\Z]\) to be holomorphic in that group algebra. In order to do this, we found those directions in which \(\lambda_{re+sa}\) was singular and used that the system \((re+sa)\delta=\nabla_{re+sa}f(xe+ya)\) must have at least one solution in each such direction. It could be difficult to replicate this process for an arbitrary real algebra as the problem of finding these directions is nonlinear.

Under our usual assumption of a divisible element, there is a better way to find these equations which play the role of the Cauchy-Riemann equations.

\begin{theorem}
\label{theorem:Cauchy-Riemann_equations}
Let \(\A\) be an \(n\)-dimensional real algebra containing a divisible element \(\alpha\) and let \(\psi\colon\A\to\A\) be either \(\lambda_\alpha^{-1}\) or \(\rho_\alpha^{-1}\), whichever exists. Suppose that \(B=\set{b_1,\dots,b_n}\) is a basis for \(\A\). Fix an inner product on \(\A\) by taking the bilinear extension of \(b_i\cdot b_j=1\) when \(i=j\) and \(0\) otherwise. Let \(f\colon\A\to\A\) be a differentiable function given by \(f(x)\cdot b_i=u_i(x)\) for differentiable functions \(u_i\colon\A\to\R\). We have that \(f\) is holomorphic if and only if for all \(1\le i,j\le n\) we have that
	\[
		b_i(\psi J_f(x)\alpha)\cdot b_j=(\psi J_f(x)\alpha)b_i\cdot b_j=\pdv{u_j}{x_i}.
	\]
\end{theorem}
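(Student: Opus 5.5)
The plan is to identify the only possible candidate for \(f'(x)\) using the divisible element \(\alpha\), and then observe that the displayed equations say exactly that this candidate works in every basis direction \(b_i\). Linearity then extends this to every direction \(v\).

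Write \(\delta_0=\psi J_f(x)\alpha=\psi\nabla_\alpha f(x)\). Suppose first that \(\psi=\lambda_\alpha^{-1}\). If \(\delta\) is an algebraic directional derivative of \(f\) at \(x\) in the direction \(\alpha\), then \(\alpha\delta=\nabla_\alpha f(x)\), so \(\delta=\lambda_\alpha^{-1}\nabla_\alpha f(x)=\delta_0\). When \(\psi=\rho_\alpha^{-1}\), the equation \(\delta\alpha=\nabla_\alpha f(x)\) forces \(\delta=\delta_0\) in the same way. Hence any \(\delta\) that works for all directions equals \(\delta_0\). In particular, uniqueness in Definition \ref{definition:holomorphic_function} is automatic once existence holds, and \(f\) is holomorphic at \(x\) if and only if \(\delta_0\) is an algebraic directional derivative in every direction \(v\).

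Next I would reduce the condition from all directions to the basis directions. Since \(f\) is differentiable, \(v\mapsto\nabla_vf(x)=J_f(x)v\) is linear. By bilinearity of the multiplication, \(v\mapsto v\delta_0\) and \(v\mapsto\delta_0v\) are also linear. So \(v\delta_0=\delta_0v=\nabla_vf(x)\) holds for all \(v\) if and only if it holds for \(v=b_1,\dots,b_n\).

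Finally I would translate the basis conditions into coordinates using the chosen inner product, for which \(B\) is orthonormal. Two vectors are equal if and only if their inner products with each \(b_j\) agree. Moreover, \(\nabla_{b_i}f(x)\cdot b_j=\nabla_{b_i}u_j(x)=\pdv{u_j}{x_i}\), because \(u_j=f\cdot b_j\) and the inner product is linear. Therefore \(b_i\delta_0=\delta_0b_i=\nabla_{b_i}f(x)\) for all \(i\) is exactly the displayed system
\[
b_i\delta_0\cdot b_j=\delta_0b_i\cdot b_j=\pdv{u_j}{x_i}\qquad(1\le i,j\le n).
\]
Combining the three steps gives the equivalence pointwise at each \(x\), and hence for holomorphy on all of \(\A\).

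None of these steps is a serious obstacle. The one point needing care is the first step: the uniqueness clause in the definition of holomorphy must be read together with existence. A vector solving the direction-\(\alpha\) equation on the side not inverted by \(\psi\) need not be unique. But any \(\delta\) that is a two-sided derivative in every direction must in particular solve the invertible one-sided equation, and so it is pinned down as \(\delta_0\). The equations are linear first-order PDEs in the \(u_j\), because \(\psi\) and the multiplication are linear and \(J_f(x)\alpha=\sum_k\alpha_k\nabla_{b_k}f(x)\) is linear in the partial derivatives.
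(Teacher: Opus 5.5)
Your proposal is correct and follows essentially the same route as the paper: you pin down the only candidate \(\psi J_f(x)\alpha\) from the invertible one-sided equation in the direction \(\alpha\), verify the basis directions by taking inner products with the \(b_j\), and extend to all \(v\) by linearity. The paper does the same thing, with an explicit basis expansion in place of your appeal to linearity, and your remark that uniqueness comes from the invertible side of the \(\alpha\)-equation is a more careful statement of the paper's appeal to the divisible element.
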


\begin{proof}
Since a holomorphic \(f\) must satisfy \(\alpha\delta=\delta\alpha=\nabla_\alpha f(x)\) we must have that \(\delta=\psi J_f(x)\alpha\). A holomorphic \(f\) must also satisfy \(v\delta=\delta v=\nabla_vf(x)\) so taking \(v=b_i\) and \(\delta\) as before we find that
	\[
		b_i(\psi J_f(x)\alpha)=(\psi J_f(x)\alpha)b_i=\nabla_{b_i}f(x)=J_f(x)b_i.
	\]
Taking the inner product with \(b_j\) yields
	\[
		b_i(\psi J_f(x)\alpha)\cdot b_j=(\psi J_f(x)\alpha)b_i\cdot b_j=J_f(x)b_i\cdot b_j=\pdv{u_j}{x_i},
	\]
so at least these conditions on the \(u_i\) are necessary for \(f\) to be holomorphic.

We now show that they are sufficient. For each \(v\in\A\) we must show that \(\psi J_f(x)\alpha\) solves \(v\delta=\delta v=\nabla_vf(x)\) for \(\delta\). Expand \(v\) in the basis \(B\) as \(v=\sum_{i=1}^nv_ib_i\). Observe that
	\begin{align*}
		v(\psi J_f(x)\alpha)\cdot b_j &= \left(\sum_{i=1}^nv_ib_i\right)(\psi J_f(x)\alpha)\cdot b_j \\
		&= \sum_{i=1}^nv_ib_i(\psi J_f(x)\alpha)\cdot b_j \\
		&= \sum_{i=1}^nv_iJ_f(x)b_i\cdot b_j \\
		&= \left(\sum_{i=1}^nJ_f(x)v_ib_i\right)\cdot b_j \\
		&= \left(J_f(x)\sum_{i=1}^nv_ib_i\right)\cdot b_j \\
		&= (J_f(x)v)\cdot b_j \\
		&= \nabla_vf(x)\cdot b_j
	\end{align*}
so it must be that \(v(\psi J_f(x)\alpha)=\nabla_vf(x)\). A symmetric calculation applies for \((\psi J_f(x)\alpha)v=\nabla_vf(x)\). This shows that \(\psi J_f(x)\alpha\) is a candidate for \(f'(x)\) and since \(\A\) contains a divisible element it must be the only one.
\end{proof}

We provide some examples of the systems of linear PDEs produced by this method. As expected, the Cauchy-Riemann equations are such a system.

\begin{example}[The Cauchy-Riemann equations for \(\C\)]
\label{example:Cauchy-Riemann_equations_for_complex_numbers}
Take the divisible element \(\alpha\) to be \(1\) and let \(B=\set{1,i}\) be our basis. Write \(f(x+yi)=u(x,y)+v(x,y)i\) so that
	\[
		J_f(x+yi)=\begin{bmatrix}\pdv{u}{x}&\pdv{u}{y}\\\pdv{v}{x}&\pdv{v}{y}\end{bmatrix}.
	\]
We have that \(\psi\colon\C\to\C\) is the identity map in this case. We see that
	\[
		b_1(\psi J_f(x)\alpha)=1J_f(x)\begin{bmatrix}1\\0\end{bmatrix}=\pdv{u}{x}+\pdv{v}{x}i
	\]
so our first two equations are
	\[
		\left(\pdv{u}{x}+\pdv{v}{x}i\right)\cdot1=\pdv{u_1}{x_1}=\pdv{u}{x}
	\]
and
	\[
		\left(\pdv{u}{x}+\pdv{v}{x}i\right)\cdot i=\pdv{u_2}{x_1}=\pdv{v}{x}.
	\]
We have that
	\[
		b_2(\psi J_f(x)\alpha)=iJ_f(x)\begin{bmatrix}1\\0\end{bmatrix}=i\left(\pdv{u}{x}+\pdv{v}{x}i\right)=-\pdv{v}{x}+\pdv{u}{x}i
	\]
so our latter two equations are
	\[
		\left(-\pdv{v}{x}+\pdv{u}{x}i\right)\cdot1=\pdv{u_1}{x_2}=\pdv{u}{y}
	\]
and
	\[
		\left(-\pdv{v}{x}+\pdv{u}{x}i\right)\cdot i=\pdv{u_2}{x_2}=\pdv{v}{y}.
	\]
We therefore obtain the following system of linear PDEs:
	\begin{align*}
		\pdv{u}{x} &= \pdv{u}{x} \\
		\pdv{v}{x} &= \pdv{v}{x} \\
		-\pdv{v}{x} &= \pdv{u}{y} \\
		\pdv{u}{x} &= \pdv{v}{y}
	\end{align*}
While the first two of these equations are trivial, the last two are precisely the Cauchy-Riemann equations.
\end{example}

Let us now revisit the group ring \(\R[\Z/2\Z]\).

\begin{example}[The Cauchy-Riemann equations for \(\R{[\Z/2\Z]}\)]
\label{example:Cauchy-Riemann_equations_for_Z_mod_2Z}
Consider the group ring from \autoref{example:the_group_ring_of_Z_mod_2Z}. Take the divisible element \(\alpha\) to be \(e\) and let \(B=\set{e,a}\) be our basis. Write \(f(xe+ya)=u(x,y)e+v(x,y)a\) so that
	\[
		J_f(xe+ya)=\begin{bmatrix}\pdv{u}{x}&\pdv{u}{y}\\\pdv{v}{x}&\pdv{v}{y}\end{bmatrix}.
	\]
We have that \(\psi\) is again the identity map. We see that
	\[
		b_1(\psi J_f(x)\alpha)=eJ_f(x)\begin{bmatrix}1\\0\end{bmatrix}=\pdv{u}{x}e+\pdv{v}{x}a
	\]
so our first two equations are
	\[
		\left(\pdv{u}{x}e+\pdv{v}{x}a\right)\cdot e=\pdv{u_1}{x_1}=\pdv{u}{x}
	\]
and
	\[
		\left(\pdv{u}{x}e+\pdv{v}{x}a\right)\cdot a=\pdv{u_2}{x_1}=\pdv{v}{x}.
	\]
We have that
	\[
		b_2(\psi J_f(x)\alpha)=aJ_f(x)\begin{bmatrix}1\\0\end{bmatrix}=a\left(\pdv{u}{x}e+\pdv{v}{x}a\right)=\pdv{v}{x}e+\pdv{u}{x}a
	\]
so our latter two equations are
	\[
		\left(\pdv{v}{x}e+\pdv{u}{x}a\right)\cdot e=\pdv{u_1}{x_2}=\pdv{u}{y}
	\]
and
	\[
		\left(\pdv{v}{x}e+\pdv{u}{x}a\right)\cdot a=\pdv{u_2}{x_2}=\pdv{v}{y}.
	\]
We therefore obtain the following system of linear PDEs:
	\begin{align*}
		\pdv{u}{x} &= \pdv{u}{x} \\
		\pdv{v}{x} &= \pdv{v}{x} \\
		\pdv{v}{x} &= \pdv{u}{y} \\
		\pdv{u}{x} &= \pdv{v}{y}
	\end{align*}
While the first two of these equations are trivial, the last two are precisely the equations we obtained in \autoref{example:the_group_ring_of_Z_mod_2Z}.
\end{example}

\section{Holomorphic functions on the RPSions}
\label{section:holomorphic_functions_on_the_RPSions}
We can now specialize our knowledge of holomorphic functions on general finite-dimensional real algebras to the specific case of \(\J\). Note that, taking \((r,p,s)\) as an ordered basis, we have that
	\[
		\lambda_{ar+bp+cs}=\rho_{ar+bp+cs}=\begin{bmatrix}a+c&0&a\\b&a+b&0\\0&c&b+c\end{bmatrix}.
	\]
Computing
	\begin{align*}
		\det(\lambda_{ar+bp+cs}) &= \begin{vmatrix}a+c&0&a\\b&a+b&0\\0&c&b+c\end{vmatrix} \\
		&= (a+c)(a+b)(b+c)+abc
	\end{align*}
we see that \(\lambda_{ar+bp+cs}\) is typically invertible. That is, a typical element of \(\J\) is divisible. Since \(\J\) contains a divisible element, all our results about holomorphic functions apply to the RPSions.

The only element of \(\J\) which annihilates \(\J\) is \(0\). This is because such an element would have to satisfy
	\[
		\begin{bmatrix}a+c&0&a\\b&a+b&0\\0&c&b+c\end{bmatrix}=\begin{bmatrix}0&0&0\\0&0&0\\0&0&0\end{bmatrix}
	\]
so \(a=b=c=0\) is the only possible situation. By \autoref{proposition:constant_functions_holomorphic}, all constant functions on \(\J\) are holomorphic.

There is no identity element in \(\J\), for such an element would have to satisfy
	\[
		\begin{bmatrix}a+c&0&a\\b&a+b&0\\0&c&b+c\end{bmatrix}=\begin{bmatrix}1&0&0\\0&1&0\\0&0&1\end{bmatrix}
	\]
so \(a=b=c=0\) yet \(a+c=a+b=b+c=1\), which is impossible. By \autoref{proposition:identity_function_holomorphic}, this means that the identity function is not holomorphic in \(\J\).

By \autoref{proposition:sum_of_holomorphic_functions} and \autoref{proposition:scalar_multiple_holomorphic_function} we have that the holomorphic functions on \(\J\) form a real vector space in the usual way. Since \(\J\) is commutative all of its elements are central, which means that all functions \(\lambda_\alpha\colon\A\to\A\) are holomorphic by \autoref{proposition:scalar_multiplication_holomorphic}.

Since all elements of \(\J\) are central, the notions of middle nuclear and weakly middle nuclear coincide, as do the notions of communal and weakly communal. It follows from \autoref{proposition:product_of_holomorphic_functions} and \autoref{proposition:chain_rule_holomorphic_functions} that the Product Rule and Chain Rule hold in \(\J\) if and only if each element is middle nuclear and communal. Certainly each element is communal since \(\J\) is commutative, but the only middle nuclear element of \(\J\) is \(0\). To see this, suppose that \(ar+bp+cs\) is middle nuclear in \(\J\). We have that
	\[
		(r(ar+bp+cs))p=r((ar+bp+cs)p)
	\]
so
	\[
		((a+c)r+bp)p=r((a+b)p+cs)
	\]
and thus
	\[
		(a+b+c)p=cr+(a+b)p.
	\]
This means that \(c=0\). Similar calculations show that we must have \(a=0\) and \(b=0\), so \(0\) is the only middle nuclear element. Neither the Product Rule nor the Chain Rule hold in \(\J\). Indeed, few pairs of holomorphic functions \(f,g\colon\J\to\J\) should satisfy either of these rules, for middle nuclear elements are in very short supply in \(\J\).

We now apply \autoref{theorem:Cauchy-Riemann_equations} in order to obtain the analogue of the Cauchy-Riemann equations for \(\J\). Take the divisible element \(\alpha\) to be \(r+p+s\) and let \(\set{r,p,s}\) be our basis. Write
	\[
		f(xr+yp+zs)=u(x,y,z)r+v(x,y,z)p+w(x,y,z)s
	\]
so that
	\[
		J_f(xr+yp+zs)=\begin{bmatrix}\pdv{u}{x}&\pdv{u}{y}&\pdv{u}{z}\\\pdv{v}{x}&\pdv{v}{y}&\pdv{v}{z}\\\pdv{w}{x}&\pdv{w}{y}&\pdv{w}{z}\end{bmatrix}.
	\]
We have that \(\psi\colon\J\to\J\) is
	\[
		\frac{1}{9}\begin{bmatrix}4&1&-2\\-2&4&1\\1&-2&4\end{bmatrix}
	\]
so
	\begin{align*}
		&\psi J_f(xr+yp+zs)\alpha = \\ &\frac{1}{9}\left(4\left(\pdv{u}{x}+\pdv{u}{y}+\pdv{u}{z}\right)+\left(\pdv{v}{x}+\pdv{v}{y}+\pdv{v}{z}\right)-2\left(\pdv{w}{x}+\pdv{w}{y}+\pdv{w}{z}\right)\right)r+ \\
		&\frac{1}{9}\left(-2\left(\pdv{u}{x}+\pdv{u}{y}+\pdv{u}{z}\right)+4\left(\pdv{v}{x}+\pdv{v}{y}+\pdv{v}{z}\right)+\left(\pdv{w}{x}+\pdv{w}{y}+\pdv{w}{z}\right)\right)p+ \\
		&\frac{1}{9}\left(\left(\pdv{u}{x}+\pdv{u}{y}+\pdv{u}{z}\right)-2\left(\pdv{v}{x}+\pdv{v}{y}+\pdv{v}{z}\right)+4\left(\pdv{w}{x}+\pdv{w}{y}+\pdv{w}{z}\right)\right)s.
	\end{align*}
Since
	\begin{align*}
		&r(\psi J_f(xr+yp+zs)\alpha) = \\ &\frac{1}{9}\left(5\left(\pdv{u}{x}+\pdv{u}{y}+\pdv{u}{z}\right)-\left(\pdv{v}{x}+\pdv{v}{y}+\pdv{v}{z}\right)+2\left(\pdv{w}{x}+\pdv{w}{y}+\pdv{w}{z}\right)\right)r+ \\
		&\frac{1}{9}\left(-2\left(\pdv{u}{x}+\pdv{u}{y}+\pdv{u}{z}\right)+4\left(\pdv{v}{x}+\pdv{v}{y}+\pdv{v}{z}\right)+\left(\pdv{w}{x}+\pdv{w}{y}+\pdv{w}{z}\right)\right)p
	\end{align*}
our first three equations are
	\[
		\frac{1}{9}\left(5\left(\pdv{u}{x}+\pdv{u}{y}+\pdv{u}{z}\right)-\left(\pdv{v}{x}+\pdv{v}{y}+\pdv{v}{z}\right)+2\left(\pdv{w}{x}+\pdv{w}{y}+\pdv{w}{z}\right)\right)=\pdv{u}{x},
	\]
	\[
		\frac{1}{9}\left(-2\left(\pdv{u}{x}+\pdv{u}{y}+\pdv{u}{z}\right)+4\left(\pdv{v}{x}+\pdv{v}{y}+\pdv{v}{z}\right)+\left(\pdv{w}{x}+\pdv{w}{y}+\pdv{w}{z}\right)\right)=\pdv{v}{x},
	\]
and
	\[
		0=\pdv{w}{x}.
	\]
Since
	\begin{align*}
		&p(\psi J_f(xr+yp+zs)\alpha) = \\
		&\frac{1}{9}\left(2\left(\pdv{u}{x}+\pdv{u}{y}+\pdv{u}{z}\right)+5\left(\pdv{v}{x}+\pdv{v}{y}+\pdv{v}{z}\right)-\left(\pdv{w}{x}+\pdv{w}{y}+\pdv{w}{z}\right)\right)p+ \\
		&\frac{1}{9}\left(\left(\pdv{u}{x}+\pdv{u}{y}+\pdv{u}{z}\right)-2\left(\pdv{v}{x}+\pdv{v}{y}+\pdv{v}{z}\right)+4\left(\pdv{w}{x}+\pdv{w}{y}+\pdv{w}{z}\right)\right)s
	\end{align*}
our next three equations are
	\[
		0=\pdv{u}{y},
	\]
	\[
		\frac{1}{9}\left(2\left(\pdv{u}{x}+\pdv{u}{y}+\pdv{u}{z}\right)+5\left(\pdv{v}{x}+\pdv{v}{y}+\pdv{v}{z}\right)-\left(\pdv{w}{x}+\pdv{w}{y}+\pdv{w}{z}\right)\right)=\pdv{v}{y},
	\]
and
	\[
		\frac{1}{9}\left(\left(\pdv{u}{x}+\pdv{u}{y}+\pdv{u}{z}\right)-2\left(\pdv{v}{x}+\pdv{v}{y}+\pdv{v}{z}\right)+4\left(\pdv{w}{x}+\pdv{w}{y}+\pdv{w}{z}\right)\right)=\pdv{w}{y}.
	\]
Since
	\begin{align*}
		&s(\psi J_f(xr+yp+zs)\alpha) = \\ &\frac{1}{9}\left(4\left(\pdv{u}{x}+\pdv{u}{y}+\pdv{u}{z}\right)+\left(\pdv{v}{x}+\pdv{v}{y}+\pdv{v}{z}\right)-2\left(\pdv{w}{x}+\pdv{w}{y}+\pdv{w}{z}\right)\right)r+ \\
		&\frac{1}{9}\left(-\left(\pdv{u}{x}+\pdv{u}{y}+\pdv{u}{z}\right)+2\left(\pdv{v}{x}+\pdv{v}{y}+\pdv{v}{z}\right)+5\left(\pdv{w}{x}+\pdv{w}{y}+\pdv{w}{z}\right)\right)s
	\end{align*}
our final three equations are
	\[
		\frac{1}{9}\left(4\left(\pdv{u}{x}+\pdv{u}{y}+\pdv{u}{z}\right)+\left(\pdv{v}{x}+\pdv{v}{y}+\pdv{v}{z}\right)-2\left(\pdv{w}{x}+\pdv{w}{y}+\pdv{w}{z}\right)\right)=\pdv{u}{z},
	\]
	\[
		0=\pdv{v}{z},
	\]
and
	\[
		\frac{1}{9}\left(-\left(\pdv{u}{x}+\pdv{u}{y}+\pdv{u}{z}\right)+2\left(\pdv{v}{x}+\pdv{v}{y}+\pdv{v}{z}\right)+5\left(\pdv{w}{x}+\pdv{w}{y}+\pdv{w}{z}\right)\right)=\pdv{w}{z}.
	\]

This system can be further simplified. Three equations become the condition
	\[
		\pdv{u}{y}=\pdv{v}{z}=\pdv{w}{x}=0.
	\]
The remaining six equations, subject to this condition, become
	\[
		-4\pdv{u}{x}+5\pdv{u}{z}-\pdv{v}{x}-\pdv{v}{y}+2\pdv{w}{y}+2\pdv{w}{z}=0,
	\]
	\[
		-2\pdv{u}{x}-2\pdv{u}{z}-5\pdv{v}{x}+4\pdv{v}{y}+\pdv{w}{y}+\pdv{w}{z}=0,
	\]
	\[
		2\pdv{u}{x}+2\pdv{u}{z}+5\pdv{v}{x}-4\pdv{v}{y}-\pdv{w}{y}-\pdv{w}{z}=0,
	\]
	\[
		\pdv{u}{x}+\pdv{u}{z}-2\pdv{v}{x}-2\pdv{v}{y}-5\pdv{w}{y}+4\pdv{w}{z}=0,
	\]
	\[
		4\pdv{u}{x}-5\pdv{u}{z}+\pdv{v}{x}+\pdv{v}{y}-2\pdv{w}{y}-2\pdv{w}{z}=0,
	\]
and
	\[
		-\pdv{u}{x}-\pdv{u}{z}+2\pdv{v}{x}+2\pdv{v}{y}+5\pdv{w}{y}-4\pdv{w}{z}=0.
	\]
These six equations are equivalent to
	\[
		2\pdv{u}{z}=\pdv{u}{x}+\pdv{v}{y}-\pdv{w}{z},
	\]
	\[
		2\pdv{v}{x}=-\pdv{u}{x}+\pdv{v}{y}+\pdv{w}{z},
	\]
and
	\[
		2\pdv{w}{y}=\pdv{u}{x}-\pdv{v}{y}+\pdv{w}{z}.
	\]
Thus, an analogue of the Cauchy-Riemann equations for \(\J\) is the following system:
	\begin{align*}
		&\pdv{u}{y}=\pdv{v}{z}=\pdv{w}{x}=0 \\
		&2\pdv{u}{z}=\pdv{u}{x}+\pdv{v}{y}-\pdv{w}{z} \\
		&2\pdv{v}{x}=-\pdv{u}{x}+\pdv{v}{y}+\pdv{w}{z} \\
		&2\pdv{w}{y}=\pdv{u}{x}-\pdv{v}{y}+\pdv{w}{z}
	\end{align*}

Note that
	\[
		\pdv{u}{x}+\pdv{u}{z}=\frac{3}{2}\pdv{u}{x}+\frac{1}{2}\pdv{v}{y}-\frac{1}{2}\pdv{w}{z},
	\]
	\[
		\pdv{v}{x}+\pdv{v}{y}=-\frac{1}{2}\pdv{u}{x}+\frac{3}{2}\pdv{v}{y}+\frac{1}{2}\pdv{w}{z},
	\]
and
	\[
		\pdv{w}{y}+\pdv{w}{z}=\frac{1}{2}\pdv{u}{x}-\frac{1}{2}\pdv{v}{y}+\frac{3}{2}\pdv{w}{z}.
	\]
In the case where \(f\colon\J\to\J\) is holomorphic, we have that
	\begin{align*}
		&f'(xr+yp+zs)= \\
		&\psi J_f(xr+yp+zs)(r+p+s) \\
		&\frac{1}{9}\left(4\left(\pdv{u}{x}+\pdv{u}{z}\right)+\left(\pdv{v}{x}+\pdv{v}{y}\right)-2\left(\pdv{w}{y}+\pdv{w}{z}\right)\right)r+ \\
		&\frac{1}{9}\left(-2\left(\pdv{u}{x}+\pdv{u}{z}\right)+4\left(\pdv{v}{x}+\pdv{v}{y}\right)+\left(\pdv{w}{y}+\pdv{w}{z}\right)\right)p+ \\
		&\frac{1}{9}\left(\left(\pdv{u}{x}+\pdv{u}{z}\right)-2\left(\pdv{v}{x}+\pdv{v}{y}\right)+4\left(\pdv{w}{y}+\pdv{w}{z}\right)\right)s= \\
		&\left(\frac{1}{2}\pdv{u}{x}+\frac{1}{2}\pdv{v}{y}-\frac{1}{2}\pdv{w}{z}\right)r+ \\
		&\left(-\frac{1}{2}\pdv{u}{x}+\frac{1}{2}\pdv{v}{y}+\frac{1}{2}\pdv{w}{z}\right)p+ \\
		&\left(\frac{1}{2}\pdv{u}{x}-\frac{1}{2}\pdv{v}{y}+\frac{1}{2}\pdv{w}{z}\right)s= \\
		&\pdv{u}{z}r+\pdv{v}{x}p+\pdv{w}{y}s.
	\end{align*}

We can use these equations and this formula for \(f'\) to check some of our earlier work. Certainly all constants satisfy the Cauchy-Riemann system for \(\J\) given above since all their partial derivatives are \(0\). The formula for \(f'\) gives the anticipated value for the derivative, which is also \(0\). The function \(\lambda_{ar+bp+cs}\) has
	\[
		u(x,y,z)=(a+c)x+az,
	\]
	\[
		v(x,y,z)=bx+(a+b)y,
	\]
and
	\[
		w(x,y,z)=cy+(b+c)z.
	\]
Certainly
	\[
		\pdv{u}{y}=\pdv{v}{z}=\pdv{w}{x}=0.
	\]
We also have that
	\begin{align*}
		2\pdv{u}{z} &= 2a \\
		&= (a+c)+(a+b)-(b+c) \\
		&= \pdv{u}{x}+\pdv{v}{y}-\pdv{w}{z},
	\end{align*}
	\begin{align*}
		2\pdv{v}{x} &= 2b \\
		&= -(a+c)+(a+b)+(b+c) \\
		&= -\pdv{u}{x}+\pdv{v}{y}+\pdv{w}{z},
	\end{align*}
and
	\begin{align*}
		2\pdv{w}{y} &= 2c \\
		&= (a+c)-(a+b)+(b+c) \\
		&= \pdv{u}{x}-\pdv{v}{y}+\pdv{w}{z}
	\end{align*}
so \(\lambda_{ar+bp+cs}\) does indeed satisfy the system of PDEs. For the derivative we see that
	\begin{align*}
		\lambda_{ar+bp+cs}'(xr+yp+zs) &= \pdv{u}{z}r+\pdv{v}{x}p+\pdv{w}{y}s \\
		&= ar+bp+cs,
	\end{align*}
as expected.

The identity function \(f(xr+yp+zs)=xr+yp+zs\) is not holomorphic in \(\J\). Here we have that
	\[
		u(x,y,z)=x,
	\]
	\[
		v(x,y,z)=y,
	\]
and
	\[
		w(x,y,z)=z.
	\]
While it is the case that
	\[
		\pdv{u}{y}=\pdv{v}{z}=\pdv{w}{x}=0
	\]
note that
	\[
		2\pdv{u}{z}=2(0)=0\neq1=1+1-1=\pdv{u}{x}+\pdv{v}{y}-\pdv{w}{z},
	\]
so \(f\) cannot be holomorphic. Note that if it were holomorphic we would have
	\[
		f'(xr+yp+zs)=\pdv{u}{z}r+\pdv{v}{x}p+\pdv{w}{y}s=0,
	\]
yet the identity function is clearly nonconstant.

With a view towards the dynamical aspect of this paper, we consider the special case of the function \(f\colon\J\to\J\) given by \(f(x)=x^2\). Note that this makes sense even in a nonassociative real algebra such as \(\J\), although for higher powers of \(x\) we would need bracketing to avoid ambiguity. We claim that \(f\) is holomorphic with \(f'(x)=2x\). To see this directly from the definition, observe that
	\begin{align*}
		\nabla_vf(x) &= \lim_{h\to0}\frac{f(x+hv)-f(x)}{h} \\
		&= \lim_{h\to0}\frac{(x+hv)^2-x^2}{h} \\
		&= \lim_{h\to0}\frac{x^2+2hxv+h^2v^2-x^2}{h} \\
		&= \lim_{h\to0}(2xv+hv^2) \\
		&= 2xv.
	\end{align*}
Certainly \(\delta=2x\) is a candidate solution for \(v\delta=\delta v=2xv\) and since \(\J\) contains a divisible element it must be the only one which works for all \(v\).

We can also apply our knowledge of the Cauchy-Riemann system obtained previously to check that \(f(x)=x^2\) is holomorphic in \(\J\). We have that
	\[
		f(xr+yp+zs)=(x^2+2xz)r+(y^2+2xy)p+(z^2+2yz)s
	\]
so
	\[
		u(x,y,z)=x^2+2xz,
	\]
	\[
		v(x,y,z)=y^2+2xy,
	\]
and
	\[
		w(x,y,z)=z^2+2yz.
	\]
As usual, it is easy to check that
	\[
		\pdv{u}{y}=\pdv{v}{z}=\pdv{w}{x}=0.
	\]
Observe that
	\begin{align*}
		2\pdv{u}{z} &= 2(2x) \\
		&= (2x+2z)+(2y+2x)-(2z+2y) \\
		&= \pdv{u}{x}+\pdv{v}{y}-\pdv{w}{z},
	\end{align*}
	\begin{align*}
		2\pdv{v}{x} &= 2(2y) \\
		&= -(2x+2z)+(2y+2x)+(2z+2y) \\
		&= -\pdv{u}{x}+\pdv{v}{y}+\pdv{w}{z},
	\end{align*}
and
	\begin{align*}
		2\pdv{w}{y} &= 2(2z) \\
		&= (2x+2z)-(2y+2x)+(2z+2y) \\
		&= \pdv{u}{x}-\pdv{v}{y}+\pdv{w}{z}
	\end{align*}
so \(f\) is holomorphic in \(\J\). The derivative of \(f\) is
	\[
		f'(xr+yp+zs)=\pdv{u}{z}r+\pdv{v}{x}p+\pdv{w}{y}s=2xr+2yp+2zs=2(xr+yp+zs),
	\]
in accord with our previous calculation from the definition of the derivative.

Since the holomorphic functions on \(\J\) include all constants and are closed under addition, we have that \(x\mapsto x^2+c\) is holomorphic for all \(c\in\J\) with the expected derivative. It may seem a bit odd that all of these functions can be holomorphic while \(x\mapsto x\) is not, but that's the situation we find ourselves in.

\section{Dynamics in real algebras}
\label{section:dynamics_in_real_algebras}
The dynamics of a holomorphic function in an arbitrary finite-dimensional real algebra may be treated similarly to that for a holomorphic function on \(\C\). We supply some definitions. We will write \(f^0(x)=x\) and \(f^{n+1}(x)=f(f^n(x))\) for \(n\in\N\).

\begin{definition}[Orbit]
\label{definition:orbit}
Given a holomorphic function \(f\colon\A\to\A\) the \emph{orbit} of \(x\in\A\) under \(f\) is
	\[
		\set[f^n(x)\in\A]{n\in\N}.
	\]
\end{definition}

\begin{definition}[Periodic point]
\label{definition:periodic_point}
We say that \(x\in\A\) is \emph{periodic} under a holomorphic function \(f\colon\A\to\A\) when \(f^n(x)=x\) for some \(n>0\). We say that a periodic point \(x\) under \(f\) has \emph{period} \(n\) (or is \emph{\(n\)-periodic}) when the orbit of \(x\) under \(f\) contains exactly \(n\) elements.
\end{definition}

Unlike in the case of complex dynamics, we cannot define whether a cycle is attracting or repelling based on the derivative of \(f^n\) for the appropriate \(n\) because \(f^n\) may not be holomorphic even when \(f\) itself is. This difficulty may be circumvented as follows.

\begin{definition}[Periodic derivative]
\label{definition:periodic_derivative}
Given an \(n\)-periodic point \(x\in\A\) of a holomorphic function \(f\colon\A\to\A\) the \emph{periodic derivative} of \(x\) under \(f\) is
	\[
		\lambda^f_x=\lambda_{f'(f^{n-1}(x))}\lambda_{f'(f^{n-2}(x))}\cdots\lambda_{f'(f(x))}\lambda_{f'(x)}.
	\]
\end{definition}

This linear transformation is closely related to the behavior of a point near \(x\). Suppose that \(\epsilon\in\A\) is a point very close to \(0\). We have that
	\[
		f(x+\epsilon)\approx f(x)+f'(x)\epsilon.
	\]
It follows that
	\[
		f^2(x+\epsilon)\approx f(f(x)+f'(x)\epsilon)\approx f^2(x)+f'(f(x))(f'(x)\epsilon).
	\]
Continuing in this fashion, we see that
	\begin{align*}
		f^n(x+\epsilon) &\approx f(f^{n-1}(x)+f'(f^{n-2}(x))(\cdots f'(f(x))(f'(x)\epsilon))) \\
		&\approx f^n(x)+f'(f^{n-1}(x))(\cdots f'(f(x))(f'(x)\epsilon)) \\
		&= x+\lambda_x^f(\epsilon).
	\end{align*}

Let \(F\colon\R^k\to\R^k\) be a linear transformation and endow \(\R^k\) with the Euclidean norm. Recall that the \emph{matrix norm} of \(F\) is
	\[
		\norm{F}=\sup\set[\norm{Fv}\in\R]{\norm{v}\le1}.
	\]
This is equivalent to defining \(\norm{F}\) to be the square root of the largest eigenvalue of \(F^TF\).

\begin{definition}[Types of periodic points]
\label{definition:types_of_periodic_points}
Suppose that \(\A\) has underlying vector space \(\R^k\) some some \(k\in\N\), which we equip with the Euclidean norm. Let \(x\in\A\) be an \(n\)-periodic point of a holomorphic function \(f\colon\A\to\A\). We say that \(x\) is
	\begin{itemize}
		\item \emph{super-attracting} when \(\norm{\lambda_x^f}=0\),
		\item \emph{attracting} when \(0<\norm{\lambda_x^f}<1\),
		\item \emph{indifferent} when \(\norm{\lambda_x^f}=1\), and
		\item \emph{repelling} when \(\norm{\lambda_x^f}>1\).
	\end{itemize}
\end{definition}

Note that our definition of super-attracting is equivalent to \(\lambda_x^f=0\), for if \(\norm{\lambda_x^f}=0\) then for all \(v\) with \(\norm{v}\le1\) we have that \(\norm{\lambda_x^fv}\le0\). This means that \(\lambda_x^fv=0\).

These definitions comport with those for complex-valued functions\cite[p.91]{beardon1991}. Since \(\C\) is associative we have that
	\[
		\lambda_x^f=\lambda_{f'(f^{n-1}(x))\cdots f'(f(x))f'(x)}=\lambda_{(f^n)'(x)}.
	\]
We find that \(\norm{\lambda_x^f}=\norm{\lambda_{(f^n)'(x)}}=\abs{(f^n)'(x)}\), as expected.

\begin{example}[Fixed points of \(x\mapsto x^2\) in \(\J\)]
\label{example:fixed_points_of_squaring_map}
We classify the behavior of the fixed points of the map \(f\colon\J\to\J\) given by \(f(x)=x^2\). Suppose that \(ar+bp+cs\) is a fixed point of \(f\). We have that
	\[
		(ar+bp+cs)^2=ar+bp+cs
	\]
so it must be that
	\[
		(a^2+2ac)r+(b^2+2ab)p+(c^2+2bc)s=ar+bp+cs.
	\]
If \(a=0\) then we have
	\[
		b^2p+(c^2+2bc)s=bp+cs.
	\]
We must have \(b^2=b\), so either \(b=0\) or \(b=1\). When \(b=0\) we see that \(c^2s=cs\) so again either \(c=0\) or \(c=1\). We find that two fixed points are \(0\) and \(s\). When \(b=1\) we have that \((c^2+2c)s=cs\) so \(c^2+c=0\). Either \(c=0\) or \(c=-1\). We find that two fixed points are \(p\) and \(p-s\). By symmetry, we have found the fixed points \(0\), \(r\), \(p\), \(s\), \(r-p\), \(p-s\), and \(s-r\). This exhausts the cases where at least one of \(a\), \(b\), or \(c\) is \(0\).

If all of the components are nonzero, we may conclude that \(a+2c=1\), \(b+2a=1\), and \(c+2b=1\). The unique solution to this linear system is \(a=b=c=\frac{1}{3}\), so our final fixed point is \(\frac{1}{3}r+\frac{1}{3}p+\frac{1}{3}s\).

Since we are working with fixed points, we have in each case that
	\[
		\lambda_x^f=\lambda_{f'(x)}=\lambda_{2x}.
	\]
When \(x=ar+bp+cs\) this means that
	\[
		\lambda_x^f=\begin{bmatrix}2a+2c&0&2a\\2b&2a+2b&0\\0&2c&2b+2c\end{bmatrix}.
	\]
We have that \(\norm{\lambda_x^f}\) is the square root of the largest eigenvalue of \((\lambda_x^f)^T\lambda_x^f\) so we must find the eigenvalues of
	\begin{align*}
		(\lambda_x^f)^T\lambda_x^f &= \begin{bmatrix}2a+2c&2b&0\\0&2a+2b&2c\\2a&0&2b+2c\end{bmatrix}\begin{bmatrix}2a+2c&0&2a\\2b&2a+2b&0\\0&2c&2b+2c\end{bmatrix} \\
		&= 4\begin{bmatrix}(a+c)^2+b^2&b(a+b)&a(a+c)\\b(a+b)&(a+b)^2+c^2&c(b+c)\\a(a+c)&c(b+c)&(b+c)^2+a^2\end{bmatrix}.
	\end{align*}

When \(x=0\) we have that \(\lambda_x^f=0\) so \(0\) is a super-attracting fixed point of \(f\). When \(x=r\) we have that
	\[
		(\lambda_x^f)^T\lambda_x^f=\begin{bmatrix}4&0&4\\0&4&0\\4&0&4\end{bmatrix}.
	\]
The eigenvalues of this matrix are \(0\), \(4\), and \(8\), so \(\norm{\lambda_x^f}=2\sqrt{2}>1\) and thus \(r\) is a repelling fixed point of \(f\). By symmetry, \(p\) and \(s\) are also repelling fixed points of \(f\). When \(x=r-p\) we have that
	\[
		(\lambda_x^f)^T\lambda_x^f=\begin{bmatrix}8&0&4\\0&0&0\\4&0&8\end{bmatrix}.
	\]
The eigenvalues of this matrix are \(0\), \(4\), and \(12\), so \(\norm{\lambda_x^f}=2\sqrt{3}>1\) and thus \(r-p\) is a repelling fixed point of \(f\). By symmetry \(p-s\) and \(s-r\) are also repelling fixed points of \(f\). When \(x=\frac{1}{3}r+\frac{1}{3}p+\frac{1}{3}s\) we have that
	\[
		(\lambda_x^f)^T\lambda_x^f=\frac{4}{9}\begin{bmatrix}5&2&2\\2&5&2\\2&2&5\end{bmatrix}.
	\]
The eigenvalues of this matrix are \(\frac{4}{3}\) and \(4\), so \(\norm{\lambda_x^f}=2>1\) and thus \(\frac{1}{3}r+\frac{1}{3}p+\frac{1}{3}s\) is a repelling fixed point of \(f\).
\end{example}

Our final example is the beginning of our examination of the Mandelbrot set for the RPSions.

\begin{example}[Behavior of \(0\) under quadratic mappings]
\label{example:behavior_of_0}
Given \(\alpha\in\J\) let \(q_\alpha\colon\J\to\J\) be given by \(q_\alpha(x)=x^2+\alpha\). Suppose that \(0\) is \(n\)-periodic under the map \(q_\alpha\colon\J\to\J\) for some constant \(\alpha=ar+bp+cs\in\J\). Note that \(q_\alpha'(x)=2x\). We have that
	\[
		\lambda_0^{q_\alpha}=\lambda_{2q_\alpha^{n-1}(0)}\lambda_{2q_\alpha^{n-2}(0)}\cdots\lambda_{2q_\alpha(0)}\lambda_{2(0)}=0
	\]
so \(\norm{\lambda_0^{q_\alpha}}=0\) and \(0\) is a super-attracting periodic point under \(q_\alpha\). Intuitively, this means that points close to \(0\) should very quickly approach the orbit of \(0\) under iteration of \(q_\alpha\).

For which values of \(\alpha\) is \(0\) periodic? We have that \(0\) is a fixed point of \(q_\alpha\) when \(0^2+(ar+bp+cs)=0\), which only happens when \(ar+bp+cs=0\). We have that \(0\) belongs to a \(2\)-cycle under \(q_\alpha\) when
	\[
		(ar+bp+cs)^2+(ar+bp+cs)=0.
	\]
This means that
	\[
		(a^2+2ac+a)r+(b^2+2ab+b)p+(c^2+2bc+c)s=0.
	\]
If \(a=0\) then
	\[
		(b^2+b)p+(c^2+2bc+c)s=0.
	\]
If \(b=0\) as well then
	\[
		(c^2+c)s=0,
	\]
so either \(c=0\) (which is the case when \(0\) is a fixed point that we already considered) or \(c=-1\). We see that \(0\) belongs to a \(2\)-cycle when \(ar+bp+cs=-s\). By symmetry, \(0\) belongs to a \(2\)-cycle when \(ar+bp+cs=-r\) or \(ar+bp+cs=-p\). If \(b\neq0\) then we have that \(b=-1\) so
	\[
		c^2-2c+c=0.
	\]
It follows that \(c^2-c=0\) so either \(c=0\) (which is the case of \(-p\) which we already found) or \(c=1\). We see that \(0\) belongs to a \(2\)-cycle when \(ar+bp+cs=-p+s\). By symmetry, \(0\) belongs to a \(2\)-cycle when \(ar+bp+cs=-s+r\) or \(ar+bp+cs=-r+p\). This exhausts the cases where at least one of \(a\), \(b\), or \(c\) is \(0\).

If all of the components are nonzero, we find that \(a+2c+1=0\), \(b+2a+1=0\), and \(c+2b+1=0\). The unique solution to this linear system is \(a=b=c=\frac{-1}{3}\), so we have that \(0\) belongs to a \(2\)-cycle when \(ar+bp+cs=-\frac{1}{3}r-\frac{1}{3}p-\frac{1}{3}s\). Note that each of the values of \(\alpha=ar+bp+cs\) for which \(q_\alpha^2(0)=0\) is the negation of a value of \(x\) with \(x^2=x\) we found in \autoref{example:fixed_points_of_squaring_map}. Certainly if \(\alpha^2=\alpha\) then we have that
	\[
		q_{-\alpha}^2(0)=q_{-\alpha}(0^2-\alpha)=q_{-\alpha}(-\alpha)=(-\alpha)^2-\alpha=\alpha^2-\alpha=\alpha-\alpha=0,
	\]
so \(0\) is either a fixed point or belongs to a \(2\)-cycle. Our calculation shows that there are no other examples than these.
\end{example}

\section{Software for fractal animations}
\label{section:software_for_fractal_animations}
We are finally ready to define the Mandelbrot set for a real algebra \(\A\) and draw pictures of it.

\begin{definition}[Mandelbrot set over a real algebra]
\label{definition:Mandelbrot_set_over_a_real_algebra}
Given a finite-dimensional real algebra \(\A\) and \(\alpha\in\A\), let \(q_\alpha\colon\A\to\A\) be given by \(q_\alpha(x)=x^2+\alpha\). Suppose that \(\A\) has underlying vector space \(\R^k\) some some \(k\in\N\), which we equip with the Euclidean norm. The \emph{Mandelbrot set (over \(\A\))} is
	\[
		\set[\alpha\in\A]{\norm{q_\alpha^n(0)}\not\to\infty\text{ as }n\to\infty}.
	\]
\end{definition}

As when we defined attracting and repelling periodic points in \autoref{definition:types_of_periodic_points}, we need to make use of a norm on \(\A\). We will only consider the case where \(\A\) is in fact \(\R^k\) as a real vector space and our norm is the Euclidean norm. Our definition of a Mandelbrot set is a direct generalization of one of the equivalent characterizations of the Mandelbrot set over the complex numbers\cite[p.249]{devaney1992}.

We have already seen some points in the Mandelbrot set for \(\J\), under the identification of \(\R^3\) with the space spanned by \(\set{r,p,s}\) in the obvious way. These are the values of \(\alpha\) for which \(0\) is fixed or belongs to a \(2\)-cycle for \(q_\alpha\), which we found in \autoref{example:behavior_of_0}. These points are \(0\), \(-r\), \(-p\), \(-s\), \(-r+p\), \(-p+s\), \(-s+r\), and \(-\frac{1}{3}r-\frac{1}{3}p-\frac{1}{3}s\).

Consider a point \(\alpha\in\J\) such that \(\alpha^2=\alpha\). We found these in \autoref{example:fixed_points_of_squaring_map}. Given \(k\in\N\) we have that
	\[
		q_\alpha(k\alpha)=k^2\alpha^2+\alpha=(k^2+1)\alpha.
	\]
Computing \(q_\alpha^n(0)\) entails applying the rule \(k\mapsto k^2+1\) a total of \(n\) times, starting with \(k=0\). Certainly for any nonzero \(\alpha\) we have that \(\norm{k\alpha}\to\infty\) as \(k\to\infty\), so \(r\), \(p\), \(s\), \(r-p\), \(p-s\), \(s-r\), and \(\frac{1}{3}r+\frac{1}{3}p+\frac{1}{3}s\) all lie outside the Mandelbrot set for \(\J\).

More generally, given \(\beta,\gamma\in\R\) we have that
	\[
		q_{\beta\alpha}(\gamma\alpha)=\gamma^2\alpha^2+\beta\alpha=(\gamma^2+\beta)\alpha.
	\]
Computing \(q_{\beta\alpha}^n(0)\) entails applying the rule \(\gamma\mapsto\gamma^2+\beta\) a total of \(n\) times. Each line spanned by an \(\alpha\) with \(\alpha^2=\alpha\) therefore has the same behavior as the usual Mandelbrot set restricted to the real line, up to some scaling. This means that for any \(\beta\) with \(\beta<-2\) or \(\beta>\frac{1}{4}\) we have that \(\norm{q_{\beta\alpha}^n(0)}\to\infty\) as \(n\to\infty\)\cite[p.19]{beardon1991}. Thus, with the exception of a line segment containing the origin, the lines spanned by \(r\), \(p\), \(s\), \(r-p\), \(p-s\), \(s-r\), and \(\frac{1}{3}r+\frac{1}{3}p+\frac{1}{3}s\) all lie outside the Mandelbrot set for \(\J\).

Rather than continue to analytically study the orbit of \(0\) under \(q_\alpha\) for various choices of \(\alpha\), we now draw images of the Mandelbrot set using the code found at \href{https://codeberg.org/caten/RealAlgebraPlot}{https://codeberg.org/caten/RealAlgebraPlot}.

The core component of this software project is the C++ program \texttt{plot.cpp}, which draws a rectangular image of a Mandelbrot set for a finite-dimensional real algebra. A real algebra \(\A\) is supplied in the form of a text file. The first line is the dimension \(n\) of \(\A\) and the next \(n^3\) lines are the structure constants for its multiplication. That is, the subsequent lines may be identified with the at-most-three-digit numbers from \(0\) to \(n^3-1\) in base \(n\) in ascending order. The line corresponding to the base \(n\) numeral \(ijk\) contains a positive or negative decimal number which is the structure constant \(s_{ijk}\). The multiplication of \(\A\) is then
	\[
		\left(\sum_{i=0}^{n-1}u_{i+1}e_{i+1}\right)\left(\sum_{j=0}^{n-1}v_{j+1}e_{j+1}\right)=\sum_{k=0}^{n-1}\left(\sum_{i,j=0}^{n-1}s_{ijk}u_{i+1}v_{j+1}\right)e_{k+1}
	\]
where the \(u_i\) and \(v_j\) are real coefficients and the \(e_k\) are the standard basis vectors for \(\A\cong\R^n\) as a real vector space.

In the beginning of \texttt{plot.cpp} we use a macro to set the maximum dimension (\texttt{MAX\char`_DIMENSION}) of a real algebra to \(8\). It is easy enough to change this to a much larger number, if one desires. The reason for this restriction is that, if one were to use template metaprogramming to create a struct \texttt{Algebra<n>} for each \(n\), one would need to know at compile time, rather than runtime, which value of \(n\) would be used. Since we would like to be able to call this plotting command for various values of \(n\), it is a bit more elegant to simply define the \texttt{Algebra} struct to be essentially \texttt{Algebra<MAX\char`_DIMENSION>} and treat algebras of lower dimension as algebras of \texttt{MAX\char`_DIMENSION} where we ignore some extraneous coordinates. Viewing the structure constants as an \(n\times n\times n\) hypermatrix, this is basically padding out to a larger \(3\)-dimensional hypermatrix with entries we never read or write.

Necessarily the Mandelbrot set for \(\A\) is a set of points in \(n\)-dimensional space, so we must take a cross-section in order to draw a \(2\)-dimensional image. The program \texttt{plot.cpp} takes as arguments the upper and lower bounds on the coefficients of \(e_1\) and \(e_2\). We then fix the other \(n-2\) coordinates in order to obtain a \(2\)-dimensional cross-section. Let the upper and lower bounds for \(e_1\) be \(x_0\) and \(x_1\). Similarly, let the upper and lower bounds for \(e_2\) be \(y_0\) and \(y_1\). Call the remaining \(n-2\) coefficients \(z_1,\dots,z_{n-2}\).

We fix a step size \(s\) and we examine all points of the form
	\[
		\alpha=(x_0+ks)e_1+(y_0+ks)e_2+z_1e_3+\cdots+z_{n-2}e_n
	\]
where \(k\in\N\), \(x_0+ks<x_1\), and \(y_0+ks<y_1\). For each such \(\alpha\) we compute \(q_\alpha^m(0)\) for larger and larger \(m\) where \(q_\alpha\colon\A\to\A\) is given by \(q_\alpha(w)=w^2+\alpha\).

In order to avoid unnecessary calculations, we check at each application of \(q_\alpha\) whether
	\[
		\norm{q_\alpha^m(0)-q_\alpha^{m+1}(0)}
	\]
has become smaller than some \texttt{fixed\char`_threshold}. If so, we assume that we have reached a fixed point under \(q_\alpha\) and stop iterating. At each step we also check whether \(\norm{q_\alpha^m(0)}\) has become greater than some \texttt{bailout\char`_threshold}. If so, we assume that we have found an orbit which blows up to infinity under \(q_\alpha\).

Of course, we can't iterate forever, so we fix a maximum number of iterations of \(q_\alpha\) to attempt. We use this to assign a natural number to each point \(\alpha\) in our selected range. If we are on iteration \(m\) and the bailout threshold is passed, we assign to that point the number \(m\). If the fixed threshold is passed, we assign to that point the number \(0\). If we iterate until the maximum allowed number of iterations and neither condition occurs, we also assign to that point the number \(0\). The number \(0\) means the point appears to be in the Mandelbrot set for \(\A\). Any other number is a measurement of how many steps it took for the critical orbit to ``escape to infinity''.

The program offers three coloring schemes. The ``monochrome'' option colors the Mandelbrot set black and the other points white, the ``greyscale'' option colors the Mandelbrot set black and colors the escaping points by various shades of grey, depending on how quickly they passed the bailout threshold, and the ``rainbow'' option behaves similarly to the greyscale one, but with bands of color instead of shades of grey. These three options are shown in \autoref{figure:coloring_options}.

\begin{figure}
	\begin{center}
		\begin{tabular}{ccc}
			\includegraphics[height=3.5cm]{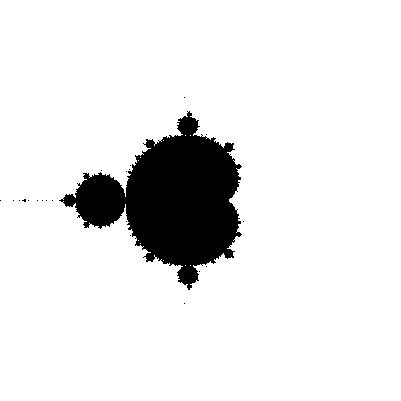} & \includegraphics[height=3.5cm]{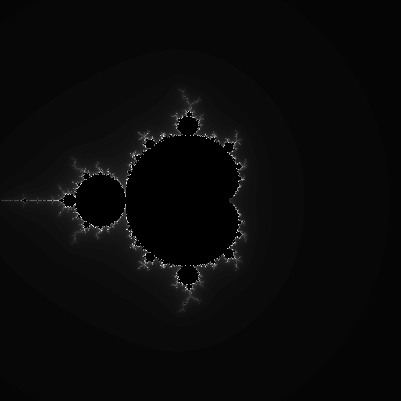} & \includegraphics[height=3.5cm]{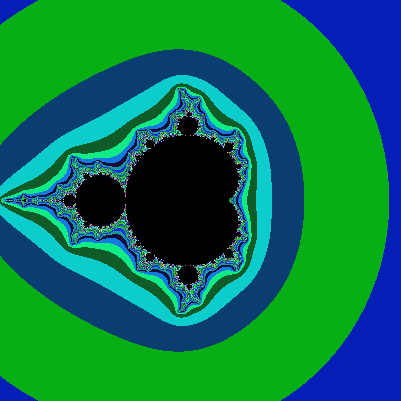}
		\end{tabular}
	\end{center}
	\caption{Coloring options for plotting fractals}
	\label{figure:coloring_options}
\end{figure}

As it can be a bit cumbersome to manually pass numerous command-line arguments to a compiled C++ program, Lua scripts are provided which automate the use of the basic plotting faculty. The script \texttt{real\char`_algebra\char`_plot.lua} contains a more user-friendly interface to \texttt{plot.cpp} by supplying many arguments as defaults. An animation function is also provided. This takes the same arguments as the usual plotting function, but requests a minimum and maximum value for the \(e_3\) coefficient as well. Motion in this third dimension is represented as time in a resulting animation, with one frame for each step taken in this third coordinate direction. See \autoref{figure:RPS_Mandelbrot_slices} for examples of the kinds of \(2\)-dimensional slices which may be concatenated into an animation by the program.

\begin{figure}
	\begin{center}
		\begin{tabular}{cc}
			\includegraphics[height=5cm]{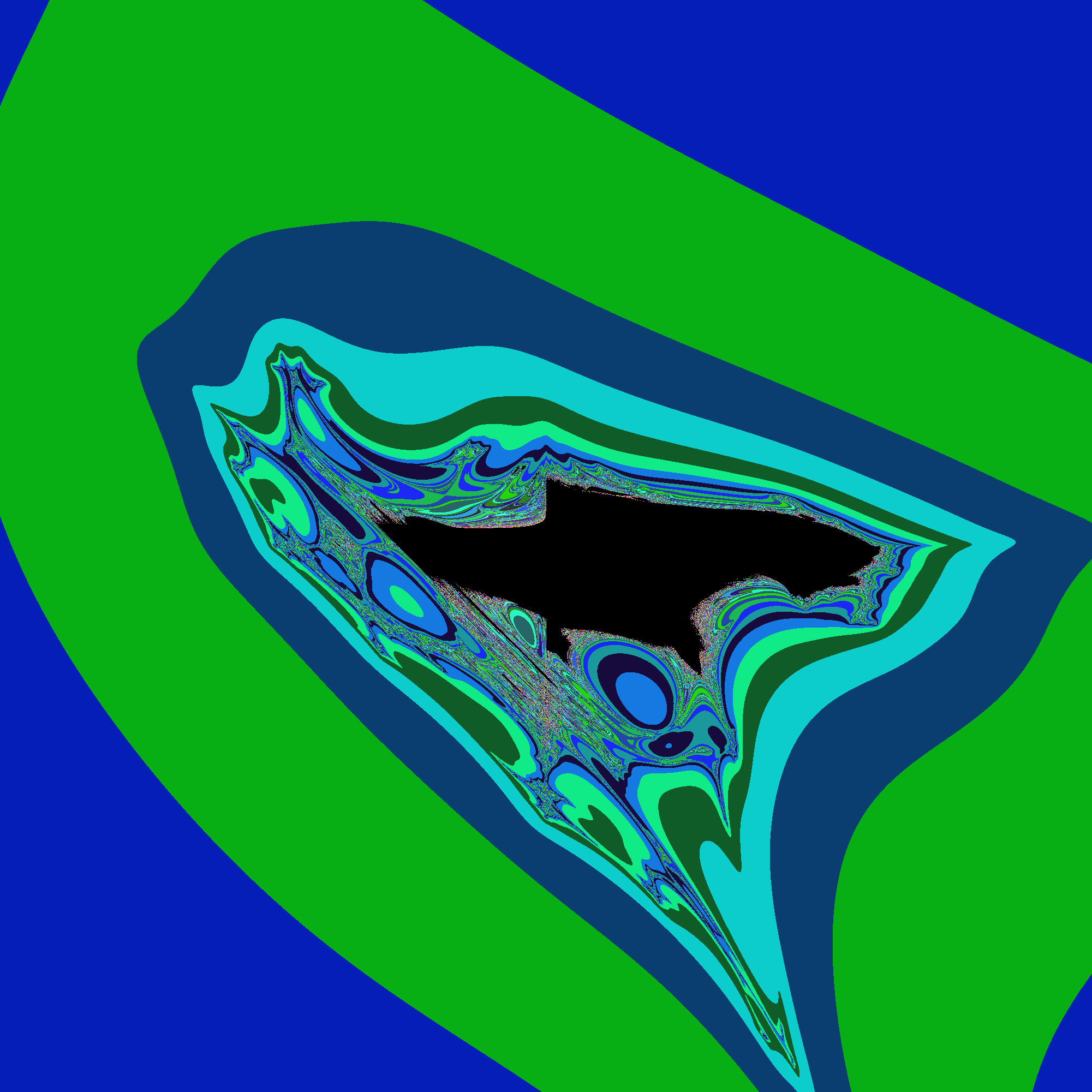} & \includegraphics[height=5cm]{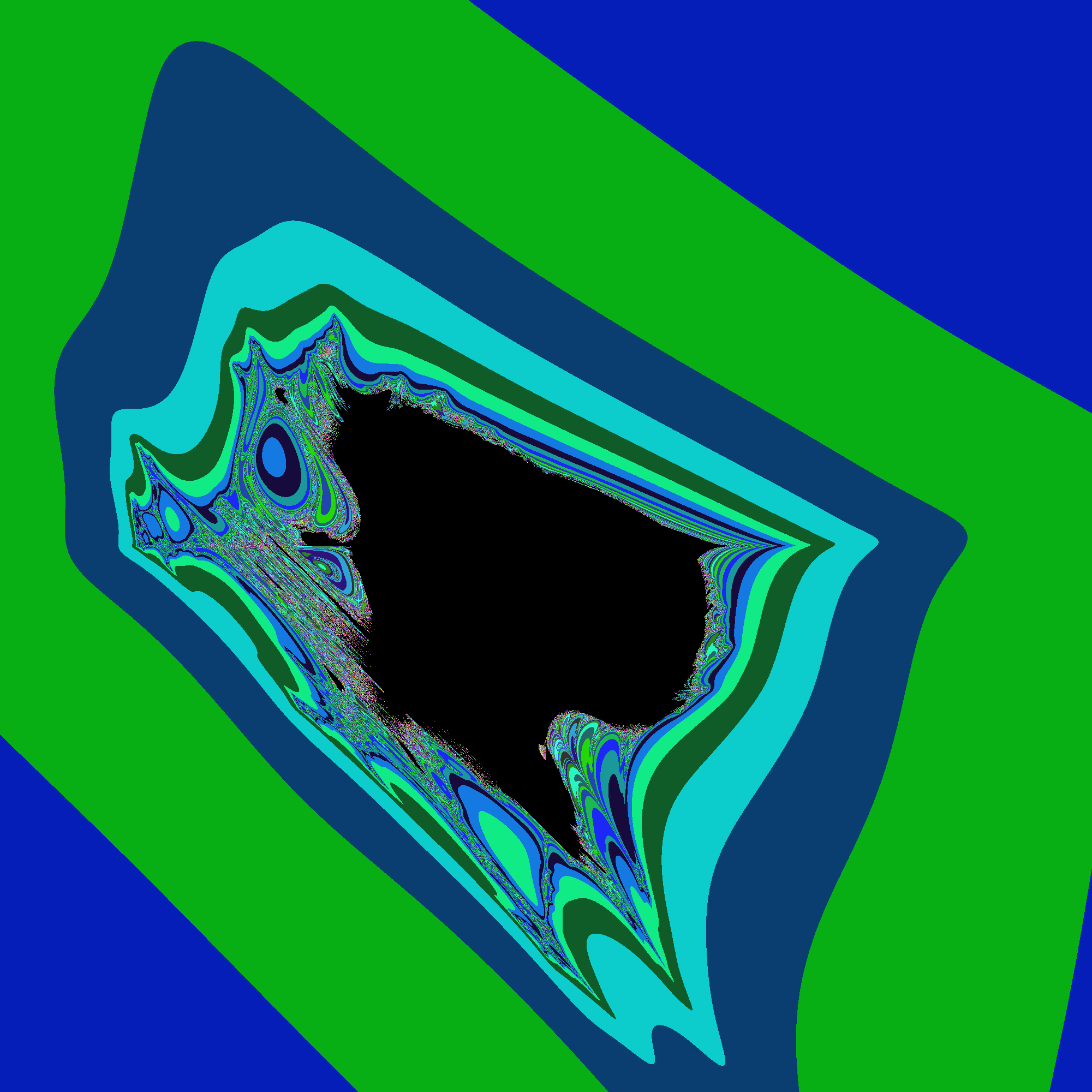} \\
			\includegraphics[height=5cm]{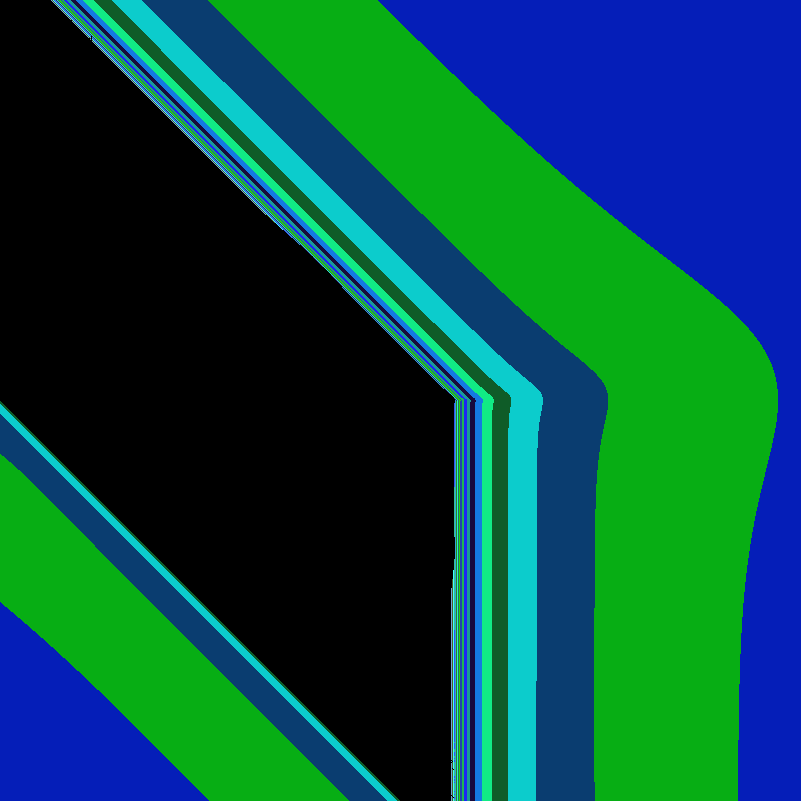} & \includegraphics[height=5cm]{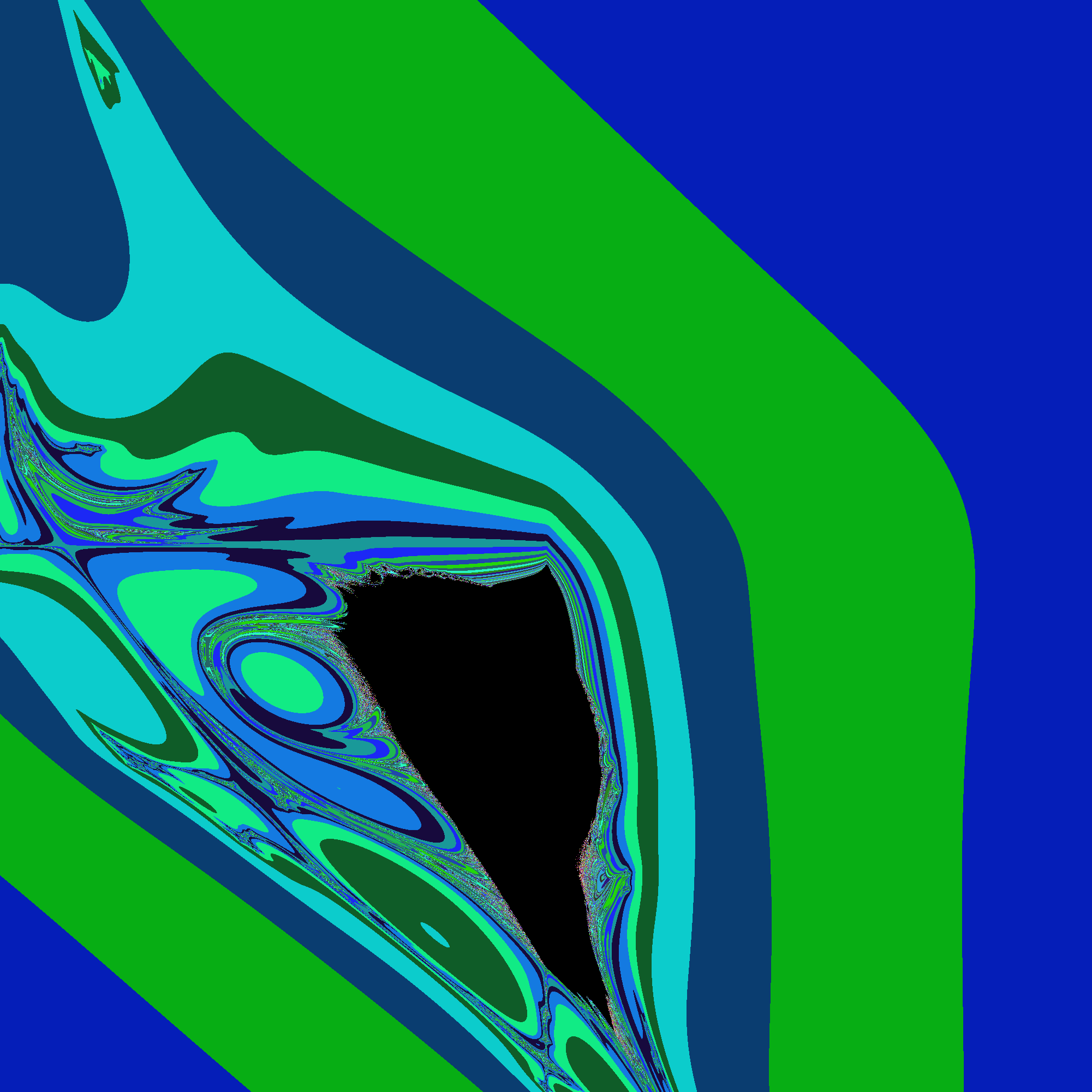} \\
			\includegraphics[height=5cm]{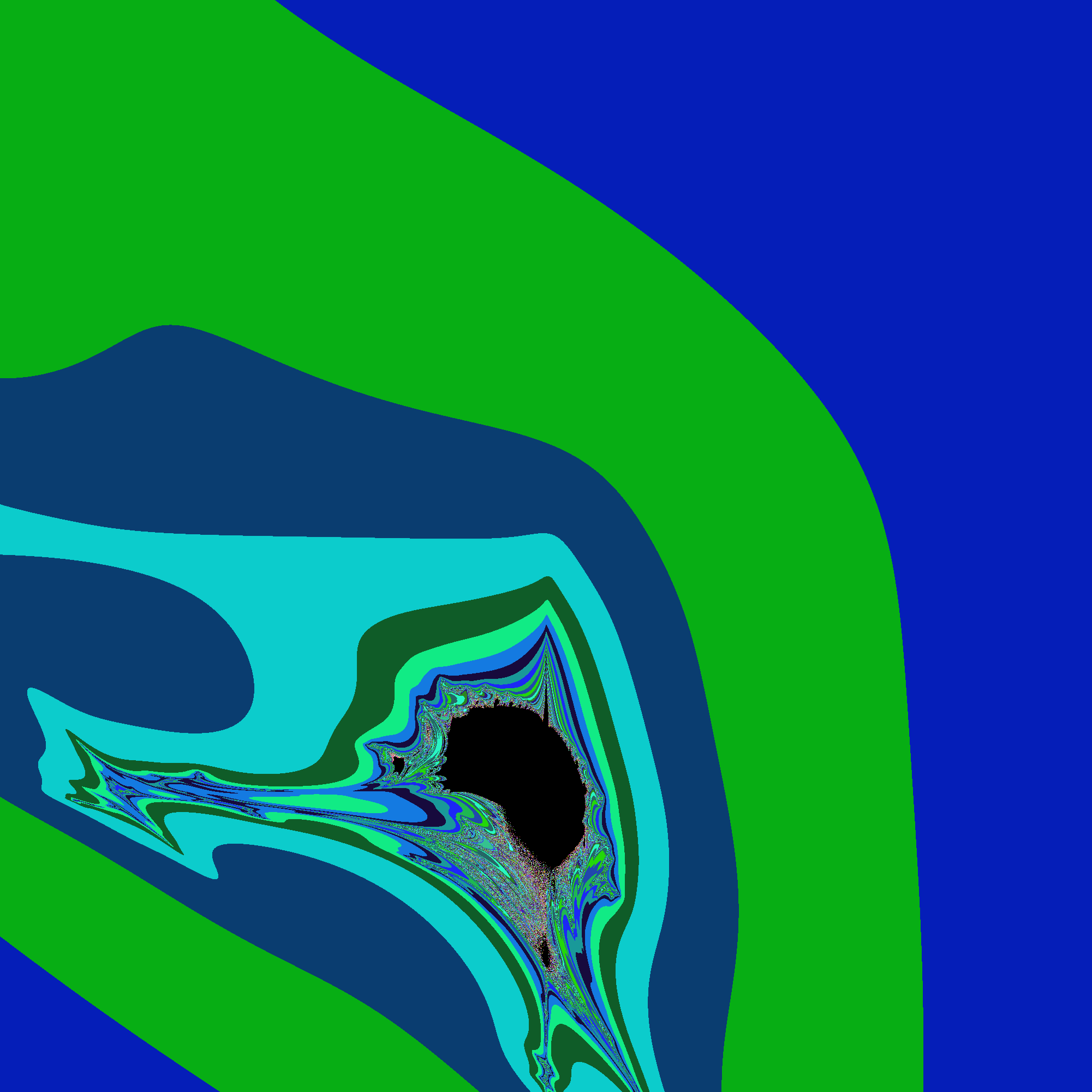} & \includegraphics[height=5cm]{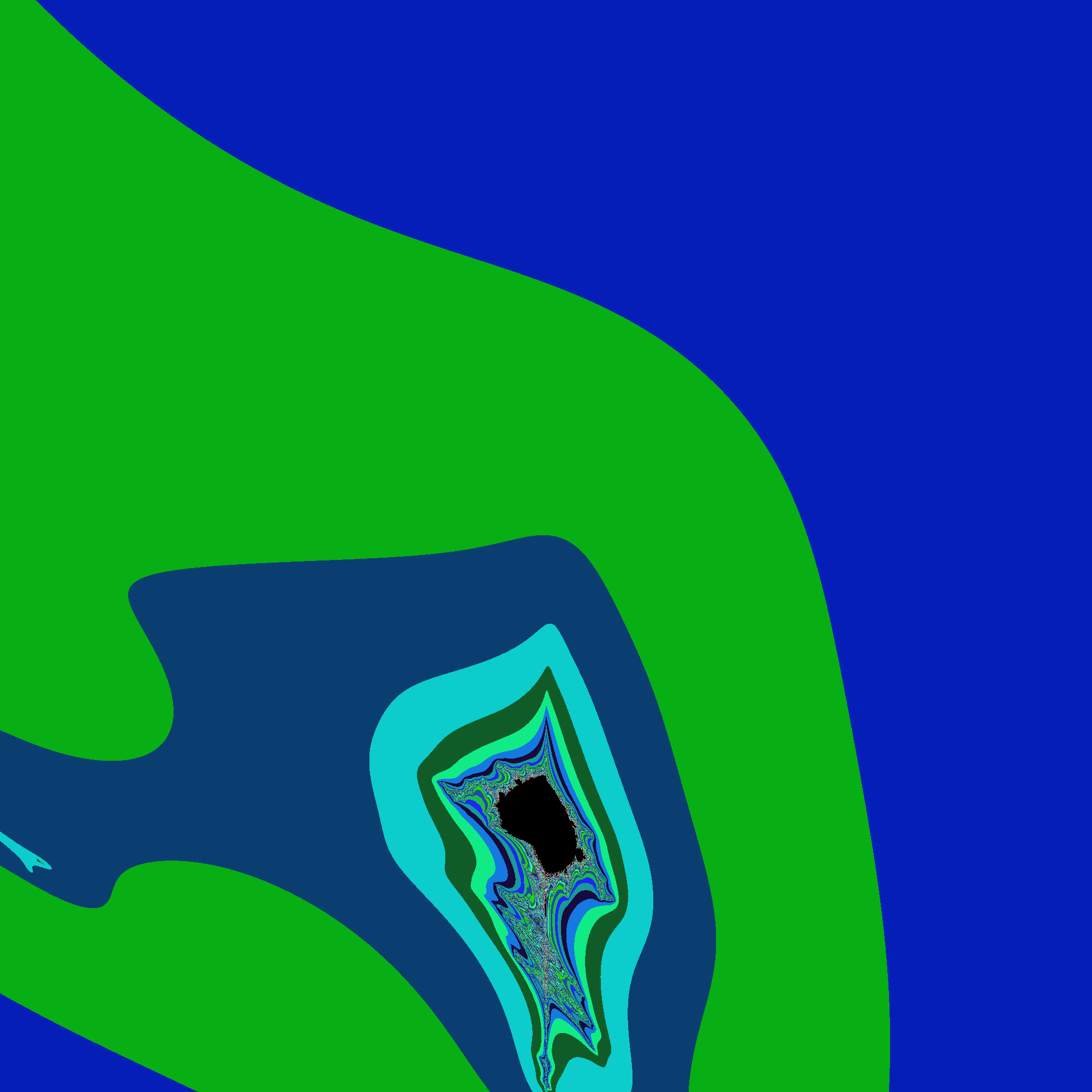}
		\end{tabular}
	\end{center}
	\caption{The rock-paper-scissors Mandelbrot set for \(z\) values \(-1\), \(\frac{-1}{2}\), \(0\), \(\frac{1}{3}\), \(\frac{2}{3}\) and \(1\)}
	\label{figure:RPS_Mandelbrot_slices}
\end{figure}

For convenience, a number of standard algebras are implemented by running the script \texttt{standard\char`_algebras.lua}. This script uses the general faculties from \texttt{algebras\char`_and\char`_magmas.lua} to instantiate as plain text files:
	\begin{itemize}
		\item the complex numbers, the quaternions, and the octonions,
		\item all groups of order between \(2\) and \(8\),
		\item all magmas of order \(2\), up to isomorphism, and
		\item the rock-paper-scissors magma,
	\end{itemize}
as well as the magma algebras for all the magmas listed above.

\section{Video game fractal visualization}
\label{section:video_game_fractal_visualization}
We can use the video game Luanti (\href{https://www.luanti.org}{https://www.luanti.org}) to visualize and explore the Mandelbrot set for various \(3\)-dimensional real algebras using the code found at \href{https://codeberg.org/caten/rpsland}{https://codeberg.org/caten/rpsland}. The world of this voxel-based game consists of a \(3\)-dimensional integer lattice of cubical nodes. Terrain is created by way of a rule which designates each node as air (i.e., empty), water, or stone. After this, the game engine may create more varied terrain from the stone, depending on the precise game and settings applied. We detail here the method by which terrain is constructed in the aforementioned code.

The script \texttt{common.lua} provides methods for performing addition and multiplication as well as computing lengths of vectors in a \(3\)-dimensional real algebra whose structure constants are supplied by the user. The default structure constants are those for the RPSions. This script also contains a function for assigning a natural number to a given point in the real algebra, depending on how quickly that point ``escapes''. Unlike in the plotting utility of the previous section, here we default to ``escaping to \(0\)'', which means that the program counts the number of iterations it takes for a point \(c\) to become within a certain small distance of \(0\) under iteration of \(z\mapsto z^2+c\). This tends to produce islands of terrain, as opposed to the usual escape to infinity from artistic fractal plotting, which produces mostly flat land interspersed with some fractal caves. The user can select ``escape to infinity'' at their option.

Since most points on the integer lattice might not give interesting results when iterated according to this paradigm, we make an adjustment. Given a point \((x,y,z)\), the user specifies scaling parameters so that the point ultimately passed into the fractal iteration function is of the form
	\[
		\left(\frac{x}{s+s_x}+x_0,\frac{y}{s+s_y}+y_0,\frac{z}{s+s_z}+z_0\right)
	\]
where \(s\) is a global scaling factor (the default is \(100\)), \(s_x\), \(s_y\), and \(s_z\) scale each coordinate individually, and \(x_0\), \(y_0\), and \(z_0\) are fixed offsets. In this way, the fine structure of the fractal may be rendered as terrain at an arbitrary level of detail.

After this, a separate fractal noise provided by the game engine may be added to the resulting triple for some additional variety. Also optional at this stage is the application of an ``embellishment'', such as passing all coordinates through the sine function. These embellishments make it possible to extend what would have been a single fractal island into a sea of fractal islands. Appropriate application of fractal noise can keep this world from becoming repetitive.

In any case, once the triple in question has been passed through the fractal iteration function the resulting natural number (essentially the number of steps it took for that point to escape) is linearly scaled to a value between the minimum and maximum height the user assigns for the terrain. If the original integer lattice point's height (the \(y\) coordinate, rather than the usual \(z\), in Luanti) is below this value, then the corresponding node is set as stone. Otherwise, it becomes air or water, depending on whether it is below sea level.

\section{Future directions}
\label{section:future_directions}
If the example of the RPSions is any indication, the notion of holomorphy introduced here may be interesting to explore in other finite dimensional real algebras. Among the magma algebras, two natural choices would be the group algebras \(\R[\mathbf{G}]\) for a finite group \(\mathbf{G}\) and the magma algebras \(\R[\mathbf{A}]\) for the RPS magmas from \cite{aten2020}. Once one has a handle on the holomorphic functions for a finite-dimensional real algebra \(\A\), a study of the attracting and repelling periodic points for familiar holomorphic maps may be made. We only just began this work for the Mandelbrot set analogue over \(\J\).

Rather than working with holomorphic functions, one might wish to pass to an analogue of meromorphic functions. If \(p(x)\) and \(q(x)\) are polynomial functions over \(\A\) (in the sense of universal algebra, where we cannot assume that variables associate), we can define corresponding ``rational functions''
	\[
		x\mapsto\lambda_{q(x)}^{-1}(p(x)) \text{ and } x\mapsto\rho_{q(x)}^{-1}(p(x))
	\]
for any values of \(x\) for which \(q(x)\) is left- or right-divisible, respectively. In the case where \(q(x)\) is always divisible on the appropriate side, except possibly when \(q(x)=0\), the resulting map might be viewed as a mapping on the \(n\)-dimensional analogue of the Riemann sphere. Just as we cannot assume that a given polynomial function is holomorphic over \(\A\), we cannot assume these analogues of rational functions are meromorphic, but this appears to be the appropriate starting point for that study.

Throughout this paper we have added the adjective ``finite-dimensional'' in many places where it might easily have been dropped. Certainly it can't be easily avoided everywhere, such as in \autoref{theorem:Cauchy-Riemann_equations}, where the upshot is that we obtain a finite system of PDEs. One might consider the generalization of our notion of holomorphy to arbitrary real algebras, perhaps demanding that the underlying vector space be a Hilbert space if the totally general case is too unwieldy.

The notion of holomorphy here is a special case of a more general family which may be of interest. Let \(P=\set{p_1,\dots,p_k}\) be a collection of \(k\) polynomials in two variables over \(\A\). We might say that a function \(f\colon\A\to\A\) is \emph{\(P\)-holomorphic} at a point \(x\in\A\) when
	\begin{enumerate}
		\item \(f\) is differentiable at \(x\) and
		\item there exists a unique \(\delta\in\A\) such that for all directions \(v\in\A\) and all \(1\le i,j\le k\) we have that
			\[
				p_i(v,\delta)=p_j(v,\delta)=\nabla_vf(x).
			\]
	\end{enumerate}
This unique value of \(\delta\) plays the role of \(f'(x)\) for this family of polynomials \(P\). The definition used in this paper corresponds to \(P=\set{xy,yx}\). Each choice of \(P\) will yield different families of holomorphic functions and would entail different relationships between algebraic properties of \(\A\) and rules of differentiation than those examined in \autoref{section:holomorphic_functions}.

On the subject of rules of differentiation, we could apply a similar analysis to that in \autoref{proposition:product_of_holomorphic_functions} for the Product Rule to any other rule we might formulate. For instance, we could ask for a characterization of those real algebras in which
	\[
		(fg)'(x)=f'(x)g'(x)
	\]
for \(f\) and \(g\) holomorphic.

While we have studied the Mandelbrot set for an algebra \(\A\), we haven't discussed the corresponding notion of Julia sets. In particular, our definition of the Mandelbrot set avoided any discussion of whether a point's membership in the Mandelbrot set says anything about the topology of the corresponding Julia set. It would be interesting to see whether this relationship generalizes from \(\C\) to other algebras.

\printbibliography

\end{document}